
\documentclass[11pt]{article}


\usepackage{authblk}
\usepackage{fullpage}  
\usepackage{float}
\usepackage{pb-diagram}  		
\usepackage{esvect}
\usepackage[utf8]{inputenc} 
\usepackage[T1]{fontenc}    
\usepackage{hyperref}       
\usepackage{url}            
\usepackage{booktabs}       
\usepackage{amsfonts}       
\usepackage{nicefrac}       
\usepackage{microtype}      
\usepackage{fullpage}  
\usepackage{float}
\usepackage{pb-diagram}  
\usepackage{makecell}		

\usepackage{url}

\usepackage{algorithm, algorithmicx,algpseudocode, listings} 
\usepackage{multirow} 
\usepackage{hhline}  
\usepackage{amsmath}
\usepackage{xcolor}

\usepackage{graphicx}
\usepackage{amscd}
\usepackage{amssymb,mathrsfs}
\input{epsf.sty}
\usepackage{amsthm,amscd}
\usepackage{color}
\usepackage{latexsym}
\usepackage{epic}
\usepackage{appendix}
\usepackage{enumerate}
\usepackage{longtable}
\usepackage{lscape}
\usepackage{extarrows}
\usepackage{epstopdf}
\usepackage{listings}
\newlength\myindent

\renewcommand{\theequation}{\arabic{equation}}

\newcommand{\Rmnum}[1]{\expandafter\@slowromancap\romannumeral #1@}
\newcommand{\inner}[3][]{{\langle #2,#3 \rangle_{#1}}}


\allowdisplaybreaks
\newcommand\numberthis{\addtocounter{equation}{1}\tag{\theequation}}


\newcommand{\whcomm}[2]{{}{#2}}
\newcommand{\whnew}[2]{{}{#2}}

\newcommand{\kwrev}[1]{{#1}}

\newcommand{\slversions}[2]{{}{#2}} 

\newtheorem{definition}{Definition}[section]
\newtheorem{theorem}{Theorem}[section]
\newtheorem{lemma}{Lemma}[section]

\newtheorem{assumption}{Assumption}[section]

\numberwithin{equation}{section}


\DeclareMathOperator{\E}{\mathrm{F}}
\DeclareMathOperator{\F}{\mathrm{F}}
\DeclareMathOperator{\T}{\mathrm{T}}
\DeclareMathOperator{\Hess}{\mathrm{Hess}}
\DeclareMathOperator{\grad}{\mathrm{grad}}

\DeclareMathOperator{\Exp}{\mathrm{Exp}}
\DeclareMathOperator{\Prox}{\mathrm{Prox}}

\DeclareMathOperator{\N}{\mathrm{N}}
\DeclareMathOperator{\D}{\mathrm{D}}
\DeclareMathOperator{\dist}{\mathrm{dist}}
\DeclareMathOperator{\trace}{\mathrm{trace}}

\DeclareMathOperator{\St}{\mathrm{St}}
\DeclareMathOperator{\Gr}{\mathrm{Gr}}

\DeclareMathOperator*{\argmin}{arg\,min}

\begin{document}

\title{An Inexact Riemannian Proximal Gradient Method
\footnotetext{Corresponding authors: Wen Huang (\texttt{wen.huang@xmu.edu.cn})  and Ke Wei (\texttt{kewei@fudan.edu.cn}). WH was partially  supported by the Fundamental Research Funds for the Central Universities (NO. 20720190060) and National Natural Science Foundation of China (NO. 12001455). KW was partially  supported by the NSFC Grant 11801088 and the Shanghai Sailing Program 18YF1401600.}}
\author[1]{Wen Huang}
\author[2]{Ke Wei}

\affil[1]{ School of Mathematical Sciences, Xiamen University, Xiamen, China.\vspace{.15cm}}
\affil[2]{School of Data Science, Fudan University, Shanghai, China.}

\maketitle


\begin{abstract}
This paper considers the problem of minimizing the summation of a differentiable function and a nonsmooth function on a Riemannian manifold. In recent years, proximal gradient method and its invariants have been generalized to the Riemannian setting for solving such problems. \kwrev{Different approaches to generalize the proximal mapping to the Riemannian setting
lead versions of Riemannian proximal gradient methods.} However, their convergence analyses all rely on  solving their Riemannian proximal mapping exactly, which is either too expensive or impracticable. In this paper, we \kwrev{study the convergence of} an inexact Riemannian proximal gradient method. It is proven that if the proximal mapping is solved sufficiently accurately, then the global convergence and local convergence rate based on the Riemannian Kurdyka-\L ojasiewicz property can be guaranteed. \kwrev{Moreover, practical conditions on the accuracy for solving the Riemannian proximal mapping are provided. As a byproduct, the proximal gradient method on the Stiefel manifold proposed in \cite{CMSZ2019} can be viewed as the inexact Riemannian proximal gradient method provided the proximal mapping is solved to certain accuracy. }
Finally, numerical experiments on sparse principal component analysis are conducted to test the proposed practical conditions. 
\end{abstract}

\section{Introduction}

Proximal gradient method and its variants are family of  efficient algorithms for composite optimization problems of the form 
\begin{equation}\label{prob0}
\min_{x \in \mathbb{R}^n} F(x) = f(x) + g(x),
\end{equation}
where $f$ is  differentiable, and $g$ is continuous but could be nonsmooth.
In the simplest form, the method updates the iterate via 
\begin{align}
&\left\{
\begin{array}{ll}
	d_k = \argmin_{p \in \mathbb{R}^{n}} \inner[\F]{\nabla f(x_k)}{p} + \frac{L}{2} \|p\|^{2}_{\F} + g(x_k + p), & \hbox{(Proximal mapping\footnotemark)}  \\
	x_{k+1} = x_k + d_k, & \hbox{(Update iterates)} \label{RPG:EPG}
\end{array}
\right.
\end{align}
where $\inner[\F]{u}{v} = u^T v$ and $\|u\|_{\F}^2 = \inner[\F]{u}{u}$.
The idea is to simplify the objective function in each iteration by replacing the differentiable term $f$ with its first order approximation around the current iterate. In many applications, the proximal mapping has a closed-form solution or can be computed efficiently. Thus, the algorithm has low per iteration cost and is applicable for large-scale problems. 
For convergence analysis of proximal gradient methods, we refer the interested readers to \cite{Beck2009,Beck2017,Darzentas1983Problem,Nesterov83,AB2009,LL2015,GL2016} and references therein.

This paper considers  a  problem similar to \eqref{prob0} but with a manifold constraint,
\begin{equation} \label{prob1}
\min_{x \in \mathcal{M}} F(x) = f(x) + g(x),
\end{equation}
where $\mathcal{M}$ is a finite dimensional Riemannian manifold. Such optimization problem is of interest due to many important applications including but not limit to compressed models~\cite{OLCO2013}, sparse principal component analysis~\cite{ZHT2006,HuaWei2019}, sparse variable principal component analysis~\cite{US2008,CMW2013,XLY2020}, discriminative $k$-means~\cite{YZW2008}, texture and imaging inpainting~\cite{LRZM2012}, co-sparse factor regression~\cite{MDC2017}, and low-rank sparse coding~\cite{ZGLXA2013,SQ2016}.

In the presence of the manifold constraints, developing Riemannian proximal gradient methods is more difficult due to nonlinearity of the domain. The update formula in~\eqref{RPG:EPG} can be generalized to the Riemannian setting using a standard technique, i.e., via the notion of retraction. 
However, generalizing the proximal mapping to the Riemannian setting is not straightforward and different versions have been proposed.
 In~\cite{CMSZ2019}, a proximal gradient method on the Stiefel manifold called \emph{ManPG}, is proposed and analyzed by generalizing the proximal mapping~\eqref{RPG:EPG} to
\begin{equation} \label{e21}
\eta_k = \argmin_{\eta \in \T_{x_k} \mathcal{M}} \inner[\F]{\nabla f(x_k)}{\eta} + \frac{\tilde{L}}{2} \|\eta\|^2_{\F} + g(x_k + \eta)
\end{equation}
via the  restriction of the search direction $\eta$ onto the tangent space at $x_k$. It is shown that such proximal mapping can be solved efficiently by a semi-smooth Newton method when the manifold $\mathcal{M}$ is the Stiefel manifold. 
In~\cite{HuaWei2019}, a diagonal weighted proximal mapping is defined by replacing $\|\eta\|_F^2$in \eqref{e21} with $\inner[\F]{\eta}{W \eta}$, where the diagonal weighted linear operator $W$ is  carefully selected. Moreover, the Nesterov momentum acceleration technique is further introduced to accelerate the algorithm, yielding an algorithm called \emph{AManPG}. Note that 
the Riemannian proximal mappings~\eqref{e21} involves the calculation of the addition, i.e., $x_k + \eta$, which cannot be defined on a generic manifold. 
In~\cite{HuaWei2019b}, a Riemannian proximal gradient method, called \emph{RPG}, is proposed by replacing the addition $x_k + p$ with a retraction $R_{x_k}(\eta)$, so that it is well-defined for generic manifolds. In addition, the Riemannian metric $\inner[x]{}{}$ is further used instead of the Euclidean inner product $\inner[\F]{}{}$, and a stationary point is used instead of a minimizer. \kwrev{ More precisely,   letting 
\begin{align*}
    \ell_{x_k}(\eta):=\inner[x_k]{\nabla f(x_k)}{\eta} + \frac{\tilde{L}}{2} \inner[x_k]{\eta}{\eta} + g(R_{x_k}(\eta)),
\end{align*}
the Riemannian proximal mapping in RPG is given by 
\begin{align}\label{e23}
\eta_k \in \T_{x_k} \mathcal{M} \hbox{ is a stationary point of $\ell_{x_k}(\eta)$ that satisfies }\ell_{x_k}(\eta_k)\leq \ell_{x_k}(0).
\end{align}}Unlike ManPG and AManPG that only guarantee global convergence, the local convergence rate of RPG has also been established in terms of  Riemannian KL property. 

 The convergence analyses of Riemannian proximal gradient methods in~\cite{CMSZ2019,HuaWei2019,HuaWei2019b} all rely on  solving proximal mappings~\eqref{e21} and~\eqref{e23} exactly. On the one hand, solving these proximal mappings exactly is not practicable due to rounding errors from the finite precision arithmetic. On the other hand, these Riemannian proximal mappings generally do not admit a closed-form solution, and finding a highly accurate solution may take too much computational time. 
\kwrev{Therefore, it makes sense to study the convergence of the inexact Riemannian proximal gradient method (i.e., the  method without solving the proximal mapping~\eqref{e23} exactly), which is essentially the goal of this paper. The main contributions of this paper can be summaries as follows:
\begin{itemize}
    \item A general framework of the inexact RPG method is presented in Section~\ref{sect:IRPG}. The global convergence as well as the local convergence rate of the method  are respectively studied in Sections~\ref{subsec:global} and \ref{subsec:local} based on different theoretical conditions. The local convergence analysis is based on the Riemannian KL property.
    \item It is shown in Section~\ref{sect:subprobGlobal} that if we solve \eqref{e21} to certain accuracy, the global convergence of the inexact RPG can be guaranteed. As a result,  ManPG  in~\cite{CMSZ2019} can be viewed as the inexact RPG method with the proximal mapping~\eqref{e23}, and it is not necessary to solve \eqref{e21} exactly for ManPG to enjoy global convergence. 
    \item Under the assumption $g$ is retraction convex, a practical condition which meets the requirement for the local convergence rate analysis is provided in Section~\ref{sec:LocalRate}. The condition is derived based on the notion of error bound. 
\end{itemize}
} 



Inexact proximal gradient methods have been investigated in the Euclidean setting, see e.g., \cite{Com2004,FP2011,SRB2011,VSBV2013,BPR2020}. Multiple practical termination criteria for the inexact proximal mapping have been given such that the global convergence and local convergence rate are preserved. However, these criteria, the corresponding theoretical results, \whcomm{}{and the algorithm design} all rely on the convexity of the function in the proximal mapping. Therefore, these methods can not be trivially generalized to the Riemannian setting since the objective function in the Riemannian proximal mapping~\eqref{e23} may not be convex due to the existence of a retraction. \whcomm{}{Note that for the inexact Riemannian proximal gradient method, the global and local convergence analyses and the 
condition that guarantees global convergence all do not assume convexity of the Riemannian proximal mapping. 
The convexity assumption is only made for the algorithm design that guarantees local convergence rate. 
}

The rest of this paper is organized as follows.  Notation and preliminaries about manifolds are given in Section~\ref{sect:NotationPreliminaries}. The inexact Riemannian proximal gradient method  is presented in Section~\ref{sect:IRPG}, followed by the convergence analysis. Section~\ref{sect:subprob} gives practical conditions on the accuracy  for solving the inexact Riemannian proximal mapping when the manifold has a linear ambient space. Numerical experiments are presented in Section~\ref{sect:NumExp} to test the practical conditions.

\section{Notation and Preliminaries on Manifolds} \label{sect:NotationPreliminaries}
The Riemannian concepts of this paper follow from the standard literature, e.g.,~\cite{Boo1986,AMS2008} and the  related notation  follows from~\cite{AMS2008}. A Riemannian manifold $\mathcal{M}$ is a manifold endowed with a Riemannian metric $(\eta_x, \xi_x) \mapsto \inner[x]{\eta_x}{ \xi_x} \in \mathbb{R}$, where $\eta_x$ and $\xi_x$ are tangent vectors in the tangent space of $\mathcal{M}$ at $x$. The induced norm in the tangent space at $x$ is denoted by $\|\cdot\|_x$ or $\|\cdot\|$ when the subscript is clear from the context.
The tangent space of the manifold $\mathcal{M}$ at $x$ is denoted by $\T_x \mathcal{M}$, and the tangent bundle, which is the set of all tangent vectors, is denoted by $\T \mathcal{M}$. A vector field is a function from the manifold to its tangent bundle, i.e., $\eta:\mathcal{M} \rightarrow \T \mathcal{M}: x\mapsto \eta_x$. An open ball on a tangent space is denoted by $\mathcal{B}(\eta_x, r) = \{\xi_x \in \T_{x} \mathcal{M} \mid \|\xi_x - \eta_x\|_x < r\}$. 
An open ball on the manifold is denoted by $\mathbb{B}(x, r) = \{ y \in \mathcal{M} \mid \dist(y, x) < r \}$, where $\dist(x, y)$ denotes the distance between $x$ and $y$ on $\mathcal{M}$. 

A retraction is a smooth ($C^\infty$) mapping from the tangent bundle to the manifold such that (i)~$R(0_x) = x$ for all $x \in \mathcal{M}$, where $0_x$ denotes the origin of $\T_x \mathcal{M}$, and (ii) $\frac{d}{d t} R(t \eta_x) \vert_{t = 0} = \eta_x$ for all $\eta_x \in \T_x \mathcal{M}$. The domain of $R$ does not need to be the entire tangent bundle. However, it is usually the case in practice, and in this paper we assume  $R$  is always well-defined. Moreover, $R_x$ denotes the restriction of $R$ to $\T_x \mathcal{M}$. 
For any $x \in \mathcal{M}$, there always exists a neighborhood of $0_x$ such that the mapping $R_x$ is a diffeomorphism in the neighborhood. An important retraction is the exponential mapping, denoted by $\mathrm{Exp}$, satisfying $\mathrm{Exp}_x(\eta_x) = \gamma(1)$, where $\gamma(0) = x$, $\gamma'(0) = \eta_x$, and $\gamma$ is the geodesic passing through $x$. In a Euclidean space, the most common retraction is the exponential mapping given by addition $\Exp_x(\eta_x) = x + \eta_x$.
If the ambient space of the manifold $\mathcal{M}$ is a finite dimensional linear space, i.e., $\mathcal{M}$ is an embedded submanifold of $\mathbb{R}^n$ or a quotient manifold whose total space is an embedded submanifold of $\mathbb{R}^n$, then there exist two constants $\varkappa_1$ and $\varkappa_2$ such that the inequalities
\begin{align}
\|R_x(\eta_x) - x\| \leq \varkappa_1 \|\eta_x\|, \label{e10} \\
\|R_x(\eta_x) - x - \eta_x\| \leq \varkappa_2 \|\eta_x\|^2 \label{e11}
\end{align}
hold for any $x \in \mathcal{N}$ and $R_x(\eta_x) \in \mathcal{N}$, where $\mathcal{N}$ is a compact subset of $\mathcal{M}$.

A vector transport $\mathcal{T}: \T \mathcal{M} \oplus \T \mathcal{M} \rightarrow \T \mathcal{M}: (\eta_x, \xi_x) \mapsto \mathcal{T}_{\eta_x} \xi_x$ associated with a retraction $R$ is a smooth ($C^\infty$) mapping such that, for all $(x, \eta_x)$ in the domain of $R$ and all $\xi_x \in \T_x \mathcal{M}$, it holds that (i) $\mathcal{T}_{\eta_x} \xi_x \in \T_{R(\eta_x)} \mathcal{M}$ (ii) $\mathcal{T}_{0_x} \xi_x = \xi_x$, and (iii) $\mathcal{T}_{\eta_x}$ is a linear map. 
An important vector transport is the parallel translation, denoted  $\mathcal{P}$. The basic idea behind the parallel translation is to move a tangent vector along a given curve on a manifold ``parallelly''. We refer to~\cite{AMS2008} for its rigorous definition. 
The vector transport by differential retraction $\mathcal{T}_R$ is defined by $\mathcal{T}_{R_{\eta_x}} \xi_x = \frac{d}{d t} R_{x}(\eta_x + t \xi_x) \vert_{t = 0}$. The adjoint operator of a vector transport $\mathcal{T}$, denoted by $\mathcal{T}^\sharp$, is a vector transport satisfying $\inner[y]{\xi_y}{\mathcal{T}_{\eta_x} \zeta_x} = \inner[x]{\mathcal{T}_{\eta_x}^\sharp \xi_y}{\zeta_x}$ for all $\eta_x, \zeta_x \in \T_x \mathcal{M}$ and $\xi_y \in \T_y \mathcal{M}$, where $y = R_x(\eta_x)$. 
In the Euclidean setting, a vector transport $\mathcal{T}_{\eta_x}$ for any $\eta_x \in \T_x \mathcal{M}$ can be represented by a matrix  (the commonly-used vector transport is the identity matrix). Then the adjoint operators of a vector transport are given by the transpose of the corresponding matrix.


The Riemannian gradient of a function $h:\mathcal{M} \rightarrow \mathbb{R}$
, denote  $\grad h(x)$, is the unique tangent vector satisfying:
\begin{equation*}
\D h(x) [\eta_x] = \inner[x]{\eta_x}{\grad h(x)}, \forall \eta_x \in \T_x \mathcal{M},
\end{equation*}
where $\D h(x) [\eta_x]$ denotes the directional derivative along the direction $\eta_x$.
The Riemannian Hessian of $h$ at $x$, denoted by $\Hess h(x)$, is a linear operator on $\T_x \mathcal{M}$ satisfying
\[
\Hess h(x) [\eta_x] = \overline{\nabla}_{ \eta_x } \grad h(x), \qquad \forall \eta_x \in \T_x \mathcal{M},
\]
where $\Hess h(x) [\eta_x]$ denotes the action of $\Hess h(x)$ on a tangent vector $\eta_x \in \T_x \mathcal{M}$, and $\overline{\nabla}$ denotes the Riemannian affine connection. Roughly speaking, an affine connection generalizes the concept of a directional derivative of a vector field and we refer to ~\cite[Section~5.3]{AMS2008} for its rigorous definition. 


A function $h:\mathcal{M} \rightarrow \mathbb{R}$ is called locally Lipschitz continuous with respect to a retraction $R$ if for any compact subset $\mathcal{N}$ of $\mathcal{M}$, there exists a constant $L_h$ such that for any $x \in \mathcal{N}$ and $\xi_x, \eta_x \in \T_x \mathcal{M}$ satisfying $R_x(\xi_x) \in \mathcal{N}$ and $R_x(\eta_x) \in \mathcal{N}$, it holds that $|h \circ R (\xi_x) - h \circ R(\eta_x) | \leq L_{h} \|\xi_x - \eta_x\|$.
If $h$ is  Lipschitz continuous but not differentiable, then the Riemannian version of generalized subdifferential defined in~\cite{HHY2018} is used. Specifically, since $\hat{h}_x = h \circ R_x$ is a Lipschitz continuous function defined on a Hilbert space $\T_x \mathcal{M}$, the Clarke generalized directional derivative at $\eta_x \in \T_x \mathcal{M}$, denoted by $\hat{h}_x^\circ(\eta_x; v)$, is defined by $\hat{h}_x^\circ(\eta_x; v) = \lim_{\xi_x \rightarrow \eta_x} \sup_{t \downarrow 0} \frac{\hat{h}_x(\xi_x + t v) - \hat{h}_x(\xi_x)}{t}$, where $v \in \T_x \mathcal{M}$. The generalized subdifferential of $\hat{h}_x$ at $\eta_x$, denoted  $\partial \hat{h}_x(\eta_x)$, is defined by $\partial \hat{h}_x(\eta_x) = \{\eta_x \in \T_x \mathcal{M} \mid \inner[x]{\eta_x}{v} \leq \hat{h}_x^\circ(\eta_x; v) \hbox{ for all } v \in \T_x \mathcal{M}\}$. The Riemannian version of the Clarke generalized direction derivative of $h$ at $x$ in the direction $\eta_x \in \T_x \mathcal{M}$, denoted  $h^\circ (x; \eta_x)$, is defined by $h^\circ (x; \eta_x) = \hat{h}_x^\circ (0_x; \eta_x)$. The generalized subdifferential of $h$ at $x$, denoted  $\partial h(x)$, is defined as $\partial h(x) = \partial \hat{h}_x(0_x)$. Any tangent vector $\xi_x \in \partial h(x)$ is called a Riemannian subgradient of $h$ at $x$. 

A vector field $\eta$ is called Lipschitz continuous if there exist a positive injectivity radius $i(\mathcal{M})$ and a positive constant $L_v$ such that for all $x, y \in \mathcal{M}$ with $\dist(x, y) < i(\mathcal{M})$, it holds that
\begin{equation} \label{VFLipCon}
\| \mathcal{P}_{\gamma}^{0 \leftarrow 1} \eta_y - \eta_x \|_x \leq L_v \dist(y, x),
\end{equation}
where $\gamma$ is a geodesic with $\gamma(0) = x$ and $\gamma(1) = y$, the injectivity radius $i(\mathcal{M})$ is defined by $i(\mathcal{M}) = \inf_{x \in \mathcal{M}} i_x$ and $i_x = \sup \{\epsilon > 0 \mid \Exp_x\vert_{\mathbb{B}(x, \epsilon)} \hbox{ is a diffeomorphism} \}$. Note that for any compact manifold, the injectivity radius is positive~\cite[Lemma~6.16]{Lee2018}. A vector field $\eta$ is called locally Lipschitz continuous if for any compact subset $\bar\Omega$ of $\mathcal{M}$, there exists a positive constant $L_v$ such that for all $x, y \in \bar\Omega$ with $\dist(x, y) < i({\bar\Omega})$, inequality~\eqref{VFLipCon} holds. A function on $\mathcal{M}$ is called (locally) Lipschitz continuous differentiable if the vector field of its gradient is (locally) Lipschitz continuous.

Let $\tilde{\Omega}$ be a subset of $\mathcal{M}$. If there exists a positive constant $\varrho$  such that, for all $y \in \tilde{\Omega}, \tilde{\Omega} \subset R_y(\mathcal{B}(0_y, \varrho))$ and $R_y$ is a diffeomorphism on $\mathcal{B}(0_y, \varrho)$, then we call $\tilde{\Omega}$ a totally retractive set with respect to $\varrho$. 
The existence of $\tilde{\Omega}$ can be shown along the lines of~\cite[Theorem~3.7]{dC92}, i.e., given any $x \in \mathcal{M}$, there exists a neighborhood of $x$ which is a totally retractive set.

In a Euclidean space, the Euclidean metric is denoted by $\inner[\E]{\eta_x}{ \xi_x}$, where $\inner[\E]{\eta_x}{ \xi_x}$  is  equal to the summation of the entry-wise products of $\eta_x$ and $\xi_x$, such as $\eta_x^T \xi_x$ for vectors and $\trace(\eta_x^T \xi_x)$ for matrices. The induced Euclidean norm is denoted by $\|\cdot\|_{\mathrm{F}}$. For any matrix $M$, the spectral norm is denoted by $\|M\|_2$. For any vector $v \in \mathbb{R}^n$, the $p$-norm, denoted $\|v\|_p$, is equal to $\left(\sum_{i = 1}^n |v_i|^p \right)^{\frac{1}{p}}$.
In this paper, $\mathbb{R}^n$ does not only refer to a vector space, but also can refer to a matrix space or a tensor space.

\section{An Inexact Riemannian Proximal Gradient Method} \label{sect:IRPG}


The proposed inexact Riemannian proximal gradient method (IRPG) is stated in Algorithm~\ref{a1}. 
The search direction $\hat{\eta}_{x_k}$ at the $k$-th iteration  solves the proximal mapping 
\begin{equation} \label{e61}
\min_{\eta \in \T_{x_k} \mathcal{M}} \ell_{x_k}(\eta)  = \inner[x_k]{\grad f(x_k)}{\eta} + \frac{\tilde{L}}{2} \|\eta\|^2 + g(R_{x_k}(\eta))
\end{equation} 
approximately in the sense that its distance to a stationary point $\eta_{x_k}^*$, $\|\hat{\eta}_{x_k} - \eta_{x_k}^*\|$, is controlled from above by a continuous function $q$ of \whcomm{}{($\varepsilon_k$, $\|\hat{\eta}_{x_k}\|$)} and the function value of $\ell_{x_k}$ satisfies $\ell_{x_k}(0) \geq \ell_{x_k}(\hat{\eta}_{x_k})$. 
To the best of our knowledge, this is not Riemannian generalization of any existing Euclidean inexact proximal gradient methods. Specifically, given the exact Euclidean proximal mapping defined by $\Prox_{\lambda g}(y) = \argmin_{x} \Phi_{\lambda}(x) := \lambda g(x) + \frac{1}{2} \|x - y\|_{\F}^2$, letting $z = \Prox_{\lambda g}(y)$, it follows that $(y - z) / \lambda \in \partial^E g(z)$ and $\dist(0, \partial^E \Phi_{\lambda}(z)) = 0$, where $\partial^E$ denotes the Euclidean subdifferential. Based on these observations, the inexact Euclidean proximal mappings proposed in~\cite{Rock1976,SRB2011,VSBV2013,BPR2020} only require  $z$ to satisfy \kwrev{any one of the following conditions:}
\begin{equation} \label{e57}
\mathrm{dist}(0, \partial^E \Phi_{\lambda}(z)) \leq \frac{\varepsilon}{\lambda}, \quad \Phi_{\lambda}(z) \leq \min \Phi_{\lambda} + \frac{\varepsilon^2}{2 \lambda}, \hbox{ and } \frac{y - z}{\lambda} \in \partial_{\frac{\varepsilon^2}{2 \lambda}}^E g(z),
\end{equation}
where $\partial_{\epsilon}^E$ denotes the Euclidean $\epsilon$-subdifferential. 
The corresponding analyses and algorithms rely on the properties of $\epsilon$-subdifferential of convex functions. However, since the function \kwrev{$\ell_{x_k}(\eta)$} is not necessarily convex, these techniques cannot be applied.
\whcomm{}{Note that if $g$ is convex and  $\mathcal{M}$ is a Euclidean space, then the function $\Phi$ is strongly convex. Therefore, the solutions of the inexact Euclidean proximal mappings in~\eqref{e57} all satisfy~\eqref{e01} with certain $q$ by choosing an appropriate choice of $\varepsilon$.}

\begin{algorithm}[ht!]
\caption{An Inexact Riemannian Proximal Gradient Method (IRPG)}
\label{a1}
\begin{algorithmic}[1]
\Require Initial iterate $x_0$; a sufficiently large positive constant $\tilde{L}$;
\For {$k = 0, 1, \ldots$}
\State Let $\ell_{x_k}(\eta) = \inner[x_k]{\grad f(x_k)}{\eta} + \frac{\tilde{L}}{2} \|\eta\|^2 + g(R_{x_k}(\eta))$;
\State \label{a1:st1}
Find $\hat{\eta}_{x_k}{\in\T_{x_k}\mathcal{M}}$ such that the following two conditions hold
\begin{align} 
\|\hat{\eta}_{x_k} - \eta_{x_k}^*\| \leq q(\whcomm{}{\varepsilon_k}, \|\hat{\eta}_{x_k}\|) \hbox{ and } \ell_{x_k}(0) \geq \ell_{x_k}(\hat{\eta}_{x_k}), \label{e01} 
\end{align}
where 
$\varepsilon_k > 0$, 
and $q:\mathbb{R}^2 \rightarrow \mathbb{R}$ is a continuous function;
\State \label{a1:st2} $x_{k+1} = R_{x_k}(\hat{\eta}_{x_k})$;
\EndFor
\end{algorithmic}
\end{algorithm}

\whnew{}{
In Algorithm~\ref{a1}, $q$ controls the accuracy for solving the  proximal mapping and different accuracies lead to different convergence results. Here we give four choices of $q$:
\begin{enumerate}
    \item[1)] $q(\varepsilon_k, \|\hat{\eta}_{x_k}\|) = \varepsilon_k$ with $\varepsilon_k \rightarrow 0$;
    \item[2)] $q(\varepsilon_k, \|\hat{\eta}_{x_k}\|) = \tilde{q}(\|\hat{\eta}_{x_k}\|)$ with $\tilde{q}: \mathbb{R} \rightarrow [0, \infty)$ a continuous function satisfying $\tilde{q}(0) = 0$;
    \item[3)] $q(\varepsilon_k, \|\hat{\eta}_{x_k}\|) = \varepsilon_k^2$, with $\sum_{k = 0}^{\infty} \varepsilon_k < \infty$; and
    \item[4)] $q(\varepsilon_k, \|\hat{\eta}_{x_k}\|) = \min(\varepsilon_k^2, \delta_q \|\hat{\eta}_{x_k}\|^2)$ with a constant $\delta_q > 0$ and $\sum_{k = 0}^{\infty} \varepsilon_k < \infty$.
\end{enumerate}
The  four choices all satisfy the requirement for the global convergence in Theorem~\ref{globaltheo}, with the first one being the weakest. A practical scheme discussed in Section~\ref{sect:subprobGlobal} can yield a $\hat{\eta}_{x_k}$ that satisfies the second choice. The third $q$ guarantees that the accumulation point is unique as shown in Theorem~\ref{single}. The last $q$ allows us to establish convergence rate analysis of Algorithm~\ref{a1} based on the Riemannian KL property, as shown in Theorem~\ref{LocalRateKL}. The practical scheme for generating $\hat{\eta}_{x_k}$ that satisfies the third and fourth choices is discussed in Section~\ref{sec:LocalRate}.}

\subsection{Global Convergence Analysis}\label{subsec:global}

The global convergence analysis is over similar to that in~\cite{HuaWei2019b} 
and relies on Assumptions~\ref{as10} and~\ref{as3} below. Assumption~\ref{as10} is mild in the sense that it holds if the manifold $\mathcal{M}$ is compact and the function $F$ is continuous.
\begin{assumption} \label{as10}
The function $F$ is bounded from below and the sublevel set $\Omega_{x_0} = \{x \in \mathcal{M} \mid F(x) \leq F(x_0)\}$ is compact.
\end{assumption}

Definition~\ref{def:Lsmooth} has been used in~\cite{BAC2018,HuaWei2019b}. It generalizes the $L$-smoothness from the Euclidean setting to the Riemannian setting. It says that if the composition $h \circ R$ satisfies the Euclidean version of $L$-smoothness, then $h$ is called a $L$-retraction-smooth function.

\begin{definition} \label{def:Lsmooth}
A function $h:\mathcal{M} \rightarrow \mathbb{R}$ is called $L$-retraction-smooth with respect to a retraction $R$ in $\mathcal{N} \subseteq \mathcal{M}$ if for any $x \in \mathcal{N}$ and any $\mathcal{S}_x \subseteq \T_x \mathcal{M}$ such that $R_x(\mathcal{S}_x)\subseteq \mathcal{N}$, we have that 
\begin{equation} \label{e65}
h (R_x(\eta)) \leq h(x) + \inner[x]{\grad h(x)}{\eta} + \frac{L}{2} \|\eta\|^2,\quad \forall \eta \in \mathcal{S}_x.
\end{equation}
\end{definition}

 In Assumption~\ref{as3}, we assume that the function $f$ is $L$-smooth in the sublevel set $\Omega_{x_0}$. This  is also mild and practical methods to verify this assumption have been given in~\cite[Lemma~2.7]{BAC2018}.

\begin{assumption} \label{as3}
The function $f$ is $L$-retraction-smooth with respect to the retraction $R$ in the sublevel set $\Omega_{x_0}$.
\end{assumption}

Lemma~\ref{le3} shows that IRPG is a descent algorithm. The short proof is the same as that for \cite[Lemma~1]{HuaWei2019b}, but  included for completeness. \slversions{Its proof follows the same derivation of~\cite[Lemma~1]{HuaWei2019b} and therefore is omitted here.}{}
\begin{lemma} \label{le3}
Suppose Assumption~\ref{as3} holds and $\tilde{L} > L$. Then the sequence $\{x_k\}$ generated by Algorithm~\ref{a1} satisfies
\begin{align}
F(x_k) - F(x_{k+1}) \geq& \beta \|\hat{\eta}_{x_k}\|^2, \label{e9} 
\end{align}
where $\beta = (\tilde{L} - L) / 2$.
\end{lemma}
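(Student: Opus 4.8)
The plan is to run the standard descent-lemma argument in the Riemannian composite setting, using only the two defining properties of the iterate in \eqref{e01}. First I would invoke Assumption~\ref{as3}: since $f$ is $L$-retraction-smooth in $\Omega_{x_0}$ and (by the descent property we are about to establish inductively, together with Assumption~\ref{as10}) the iterates stay in the sublevel set $\Omega_{x_0}$, Definition~\ref{def:Lsmooth} applied at $x_k$ with $\eta=\hat\eta_{x_k}$ gives
\begin{equation*}
f(R_{x_k}(\hat\eta_{x_k})) \leq f(x_k) + \inner[x_k]{\grad f(x_k)}{\hat\eta_{x_k}} + \frac{L}{2}\|\hat\eta_{x_k}\|^2 .
\end{equation*}
Adding $g(R_{x_k}(\hat\eta_{x_k}))$ to both sides and recalling $x_{k+1}=R_{x_k}(\hat\eta_{x_k})$ yields
\begin{equation*}
F(x_{k+1}) \leq f(x_k) + \inner[x_k]{\grad f(x_k)}{\hat\eta_{x_k}} + \frac{L}{2}\|\hat\eta_{x_k}\|^2 + g(R_{x_k}(\hat\eta_{x_k})).
\end{equation*}

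Next I would rewrite the right-hand side in terms of $\ell_{x_k}$. By the definition of $\ell_{x_k}$ in line~2 of Algorithm~\ref{a1}, the right-hand side above equals $f(x_k) + \ell_{x_k}(\hat\eta_{x_k}) - \frac{\tilde L}{2}\|\hat\eta_{x_k}\|^2 + \frac{L}{2}\|\hat\eta_{x_k}\|^2$. Now I use the second condition in \eqref{e01}, namely $\ell_{x_k}(\hat\eta_{x_k}) \leq \ell_{x_k}(0)$, and observe that $\ell_{x_k}(0) = g(R_{x_k}(0)) = g(x_k)$ since the linear and quadratic terms vanish at $\eta=0$ and $R_{x_k}(0)=x_k$. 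Hence
\begin{equation*}
F(x_{k+1}) \leq f(x_k) + g(x_k) - \frac{\tilde L - L}{2}\|\hat\eta_{x_k}\|^2 = F(x_k) - \beta\|\hat\eta_{x_k}\|^2,
\end{equation*}
with $\beta = (\tilde L - L)/2 > 0$ because $\tilde L > L$. Rearranging gives \eqref{e9}.

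The only subtlety — and the single point that needs care rather than routine calculation — is ensuring that the $L$-retraction-smoothness inequality is legitimately applicable, i.e. that $x_k$ and $x_{k+1}=R_{x_k}(\hat\eta_{x_k})$ both lie in $\Omega_{x_0}$, since Assumption~\ref{as3} only guarantees the bound on that sublevel set. This is handled by induction on $k$: $x_0\in\Omega_{x_0}$ trivially, and if $x_k\in\Omega_{x_0}$ then the descent inequality just derived gives $F(x_{k+1})\le F(x_k)\le F(x_0)$, so $x_{k+1}\in\Omega_{x_0}$ as well; the chain $R_{x_k}(\mathcal S_{x_k})\subseteq\Omega_{x_0}$ required in Definition~\ref{def:Lsmooth} is met by taking $\mathcal S_{x_k}$ to be (a neighborhood containing) $\hat\eta_{x_k}$ along the relevant segment. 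Note that the first condition in \eqref{e01}, the accuracy bound $\|\hat\eta_{x_k}-\eta_{x_k}^*\|\le q(\varepsilon_k,\|\hat\eta_{x_k}\|)$, is not needed here at all; only the sufficient-decrease condition $\ell_{x_k}(\hat\eta_{x_k})\le\ell_{x_k}(0)$ drives the descent, which is exactly why the proof is identical to that of \cite[Lemma~1]{HuaWei2019b}.
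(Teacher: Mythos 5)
Your proof is correct and uses exactly the same two ingredients as the paper's one-line argument—the sufficient-decrease condition $\ell_{x_k}(\hat\eta_{x_k})\le\ell_{x_k}(0)=g(x_k)$ and the $L$-retraction-smoothness bound on $f$—merely applied in the opposite order (the paper starts from $F(x_k)\ge f(x_k)+\ell_{x_k}(\hat\eta_{x_k})$ and then invokes smoothness, while you start from smoothness and then invoke the decrease condition). Your added induction keeping the iterates in $\Omega_{x_0}$ is a sensible bit of bookkeeping that the paper leaves implicit; otherwise the argument is the same.
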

\slversions{}{
\begin{proof}
By the definition of $\hat{\eta}_{x_k}$ and the $L$-retraction-smooth of $f$, we have
\begin{align}
F(x_k) &= f(x_k) + g(x_k) \geq f(x_k) + \inner[x_k]{\grad f(x_k)}{\hat{\eta}_{x_k}} + \frac{\tilde{L}}{2} \|\hat{\eta}_{x_k}\|^2 + g(R_{x_k}(\hat{\eta}_{x_k})) \nonumber \\
&\geq \frac{\tilde{L} - L}{2} \|\hat{\eta}_{x_k}\|^2 + f(R_{x_k}(\hat{\eta}_{x_k})) + g(R_{x_k}(\hat{\eta}_{x_k})) = F(x_{k+1}) + \frac{\tilde{L} - L}{2} \|\hat{\eta}_{x_k}\|^2, \nonumber
\end{align}
which completes the first result. 
\end{proof}
}

Now, we are ready to give a global convergence analysis of IRPG. 
\begin{theorem} \label{globaltheo}
Suppose that Assumptions~\ref{as10} and~\ref{as3} hold, that $\tilde{L} > L$, and that \whnew{}{$\lim_{k \rightarrow \infty} q(\varepsilon_k, \|\hat{\eta}_{x_k}\|) = 0$.}
Then the sequence $\{x_k\}$ has at least one accumulation point.
Let $x_*$ be any accumulation point of the sequence $\{x_k\}$.
Then $x_*$ is a stationary point.
Furthermore, Algorithm~\ref{a1} returns $x_k$ satisfying $\|\hat{\eta}_{x_k}\| \leq \epsilon$ in at most $(F(x_0) - F(x_*)) / (\beta \epsilon^2)$ iterations.
\end{theorem}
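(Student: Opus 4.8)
The plan is to prove the three assertions of Theorem~\ref{globaltheo} in sequence, leaning on the descent inequality~\eqref{e9} from Lemma~\ref{le3} as the main engine, exactly in the spirit of the exact-RPG analysis of~\cite{HuaWei2019b} but carrying along the inexactness term $q(\varepsilon_k,\|\hat\eta_{x_k}\|)$.

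First, existence of an accumulation point: by Lemma~\ref{le3}, $\{F(x_k)\}$ is nonincreasing, and since $x_0 \in \Omega_{x_0}$ and $F$ is descending, the whole sequence $\{x_k\}$ stays in the sublevel set $\Omega_{x_0}$, which is compact by Assumption~\ref{as10}; hence $\{x_k\}$ has at least one accumulation point. Moreover $\{F(x_k)\}$ is bounded below (Assumption~\ref{as10}), so it converges; summing~\eqref{e9} over $k$ gives $\beta \sum_{k=0}^\infty \|\hat\eta_{x_k}\|^2 \le F(x_0) - \lim_k F(x_k) < \infty$, which forces $\|\hat\eta_{x_k}\| \to 0$. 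This also immediately yields the iteration-complexity bound: as long as $\|\hat\eta_{x_k}\| > \epsilon$ we lose at least $\beta\epsilon^2$ in $F$ per step, so after more than $(F(x_0)-F(x_*))/(\beta\epsilon^2)$ steps we must have hit some $x_k$ with $\|\hat\eta_{x_k}\|\le\epsilon$ (using $F(x_k)\ge F(x_*)=\lim_j F(x_j)$).

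The substantive part is showing any accumulation point $x_*$ is stationary. Take a subsequence $x_{k_i} \to x_*$. By the hypothesis $q(\varepsilon_k,\|\hat\eta_{x_k}\|)\to 0$ and $\|\hat\eta_{x_k}\|\to 0$, the inexactness condition in~\eqref{e01} gives $\|\hat\eta_{x_{k_i}} - \eta_{x_{k_i}}^*\| \le q(\varepsilon_{k_i},\|\hat\eta_{x_{k_i}}\|) \to 0$, so $\|\eta_{x_{k_i}}^*\| \to 0$ as well; here $\eta_{x_{k_i}}^*$ is an exact stationary point of $\ell_{x_{k_i}}$. The idea is then to pass to the limit in the stationarity condition for $\eta_{x_{k_i}}^*$, i.e. $0 \in \partial \ell_{x_{k_i}}(\eta_{x_{k_i}}^*)$, which unfolds (by the sum rule for the Clarke subdifferential of $\eta \mapsto \langle \grad f(x_{k_i}),\eta\rangle + \tfrac{\tilde L}{2}\|\eta\|^2 + g(R_{x_{k_i}}(\eta))$ at $\eta = \eta_{x_{k_i}}^*$) into the existence of a Riemannian subgradient of $g\circ R_{x_{k_i}}$ at $\eta_{x_{k_i}}^*$ balancing $\grad f(x_{k_i}) + \tilde L \eta_{x_{k_i}}^*$. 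Using continuity of $\grad f$, the fact that $\eta_{x_{k_i}}^* \to 0_{x_*}$, smoothness of $R$ with $R_{x_*}(0_{x_*}) = x_*$ and $DR_{x_*}(0_{x_*}) = \id$, and an outer-semicontinuity / closedness property of the generalized (Clarke) subdifferential under the vector-transport identification of nearby tangent spaces, one concludes $0 \in \grad f(x_*) + \partial g(x_*)$, i.e. $x_*$ is a stationary point of $F$ on $\mathcal{M}$. This is precisely the argument behind the corresponding step in~\cite{HuaWei2019b}, and since that paper already handled the exact case $\hat\eta = \eta^*$, the only new ingredient is the triangle-inequality step $\|\hat\eta_{x_{k_i}}\| \to 0 \Rightarrow \|\eta_{x_{k_i}}^*\| \to 0$.

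I expect the main obstacle to be the limiting argument for subdifferentials: one must be careful that the generalized subdifferential $\partial(g\circ R_x)$ behaves upper-semicontinuously both in the base point $x$ (through $x_{k_i}\to x_*$) and in the tangent-space argument (through $\eta_{x_{k_i}}^*\to 0$), while also transporting everything to a common tangent space $\T_{x_*}\mathcal{M}$ to make "convergence of subgradients" meaningful. The cleanest route is to invoke the relevant closedness lemma for the Riemannian Clarke subdifferential from~\cite{HHY2018,HuaWei2019b} verbatim; the rest of the proof is the routine telescoping/compactness bookkeeping sketched above, and I would state it concisely, citing Lemma~\ref{le3} and Assumptions~\ref{as10}--\ref{as3} at each use.
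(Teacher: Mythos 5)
Your proposal is correct and follows essentially the same route as the paper: descent from Lemma~\ref{le3} plus compactness of $\Omega_{x_0}$ gives an accumulation point and (by summing~\eqref{e9}) $\|\hat{\eta}_{x_k}\|\to 0$, the hypothesis $q(\varepsilon_k,\|\hat{\eta}_{x_k}\|)\to 0$ then forces $\|\eta_{x_k}^*\|\to 0$, and the stationarity of accumulation points is obtained exactly as in the exact-RPG argument of~\cite{HuaWei2019b} (which the paper simply cites at this point, just as you propose to invoke the subdifferential closedness lemma). The complexity bound you derive from the per-iteration decrease is the same bookkeeping the paper relies on.
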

\begin{proof}
The proof mainly follows the proof in~\cite[Theorem~1]{HuaWei2019b}. Here, we only highlight the differences.
The existence of an accumulation point follows immediately from Assumption~\ref{as10} and Lemma~\ref{le3}.

By Lemma~\ref{le3}, we have that $F(x_0) - F(\tilde{x}) \geq \beta \sum_{i = 0}^\infty \|\hat{\eta}_{x_k}\|^2$, where $\tilde{x}$ denotes a global minimizer of $F$. Therefore, 
\begin{equation} \label{e79}
\lim_{k \rightarrow \infty} \|\hat{\eta}_{x_k}\| = 0.
\end{equation}
By~\whnew{}{$\lim_{k \rightarrow \infty} q(\varepsilon_k, \|\hat{\eta}_{x_k}\|) = 0$}, we have
\begin{equation*}
\lim_{k \rightarrow \infty} \|\eta_{x_k}^*\| = 0.
\end{equation*}
The remaining of the proof follows~\cite[Theorem~1]{HuaWei2019b}.
\end{proof}

\subsection{Local Convergence Rate Analysis Using Riemannian Kurdyka-\L ojasiewicz Property}\label{subsec:local}

The KL property has been widely used for the convergence analysis of various convex and nonconvex algorithms in the Euclidean case, see e.g., \cite{ABRS2010,ABS2013,BST2014,LL2015}. In this section we will study the convergence of RPG base on the Riemannian Kurdyka-\L ojasiewicz (KL) property, introduced in~{\cite{KMP2000}} for the analytic setting and in~\cite{BCO2011} for the nonsmooth setting. \whcomm{}{Note that a convergence analysis based on KL property for a Euclidean inexact proximal gradient has been given in~\cite{BPR2020}. As we pointed out before, the convergence analysis and algorithm  design therein rely on the convexity of the objective in the proximal mapping.}

\begin{definition} \label{def:RKL}
A continuous function $f: \mathcal{M} \rightarrow \mathbb{R}$ is said to have the Riemannian KL property at $x \in \mathcal{M}$ if and only if there exists $\varepsilon \in (0, \infty]$, a neighborhood $U \subset \mathcal{M}$ of $x$, and a continuous concave function $\varsigma: [0, \varepsilon] \rightarrow [0, \infty)$ such that
\begin{itemize}
\item $\varsigma(0) = 0$,
\item $\varsigma$ is $C^1$ on $(0, \varepsilon)$,
\item $\varsigma' > 0$ on $(0, \varepsilon)$,
\item For every $y \in U$ with $f(x) < f(y) < f(x) + \varepsilon$, we have 
\[
\varsigma'(f(y) - f(x)) \dist(0, \partial f(y)) \geq 1,
\]
where $\dist(0, \partial f(y)) = \inf\{ \|v\|_y : v \in \partial f(y) \}$ and $\partial$ denotes the Riemannian generalized subdifferential. The function $\varsigma$ is called the desingularising function.
\end{itemize}
\end{definition}
\slversions{}{Note that the definition of the Riemannian KL property is overall similar to the KL property in the Euclidean setting, except that related notions including $U$, $\partial f(y)$ and $\dist(0, \partial f(y))$ are all defined on a manifold.}
In~\cite{BCO2011,HuaWei2019b}, sufficient conditions to verify if a function satisfies the Riemannian KL condition are given.

Assumptions~\ref{as19} and~\ref{as04} are used for the convergence analysis in this section. Assumption~\ref{as19} is a standard assumption and has been made in e.g.,~\cite{LL2015}, when the manifold $\mathcal{M}$ is the Euclidean space.
\begin{assumption} \label{as19}
The function $f:\mathcal{M} \rightarrow \mathbb{R}$ is locally Lipschitz continuously differentiable. 
\end{assumption}

\begin{assumption} \label{as04}
The function $F$ is locally Lipschitz continuous with respect to the retraction $R$.
\end{assumption}



In order to guarantee the uniqueness of accumulation points, the Riemannian proximal mapping needs to be solved more accurately than~\eqref{e01}, as shown in Theorem~\ref{single}. 

\begin{theorem}\label{single}
Let $\{x_k\}$ denote the sequence generated by Algorithm~\ref{a1} and $\mathcal{S}$ denote the set of all accumulation points.
Suppose Assumptions~\ref{as10}, ~\ref{as3}, ~\ref{as19} and~\ref{as04} hold. 
We further assume that $F = f + g$ satisfies the Riemannian KL property at every point in $\mathcal{S}$. 
If the Riemannian proximal mapping~\eqref{e23} is solved such that 
for all $k$,
\begin{align} \label{e28}
\|\hat{\eta}_{x_k} - {\eta}_{x_k}^*\| \leq \whnew{}{\varepsilon_k^2}, 
\end{align}
\whcomm{}{that is, $q(\varepsilon_k, \|\hat{\eta}_{x_k}\|) = \varepsilon_k^2$.}
Then, 
\begin{align*}
\sum_{k=0}^\infty \dist(x_k,x_{k+1})<\infty. \numberthis\label{eq:finite}
\end{align*}
It follows that $\mathcal{S}$ only contains a single point.
\end{theorem}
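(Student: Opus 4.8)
The plan is to adapt the standard Kurdyka–Łojasiewicz descent argument (as in~\cite{LL2015,ABS2013} and especially~\cite[Theorem~2]{HuaWei2019b}) to the inexact setting, where the main new ingredient is controlling the perturbation $\|\hat\eta_{x_k}-\eta_{x_k}^*\|\le\varepsilon_k^2$ together with the summability $\sum_k\varepsilon_k<\infty$. First I would record the basic consequences of the hypotheses already available: by Lemma~\ref{le3} and Assumption~\ref{as10} the sequence $\{x_k\}$ lies in the compact sublevel set $\Omega_{x_0}$, $F(x_k)$ is nonincreasing and converges to some value $F_*$, and $\|\hat\eta_{x_k}\|\to 0$ (hence $\dist(x_k,x_{k+1})\le\varkappa_1\|\hat\eta_{x_k}\|\to0$ by~\eqref{e10}); moreover by~\eqref{e28} and the triangle inequality $\|\eta_{x_k}^*\|\to0$ as well. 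By Theorem~\ref{globaltheo} every accumulation point $x_*\in\mathcal S$ is stationary, and a short argument (constancy of $F$ along the tail, continuity of $F$) shows $F\equiv F_*$ on $\mathcal S$. The case where $x_k$ reaches $F_*$ in finitely many steps is trivial since then the tail is eventually constant; so assume $F(x_k)>F_*$ for all $k$.

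Next I would invoke the uniformized Riemannian KL property on the compact set $\mathcal S$: since $F$ satisfies the Riemannian KL property at each point of $\mathcal S$ and is constant there, a standard compactness argument (the Riemannian analogue of~\cite[Lemma~6]{BST2014}, used in~\cite{HuaWei2019b}) yields a single desingularising function $\varsigma$, an $\varepsilon>0$, and a neighborhood $U\supseteq\mathcal S$ such that $\varsigma'(F(y)-F_*)\,\dist(0,\partial F(y))\ge1$ for all $y\in U$ with $F_*<F(y)<F_*+\varepsilon$. Because $\dist(x_k,\mathcal S)\to0$ and $F(x_k)\downarrow F_*$, all sufficiently large $k$ satisfy $x_k\in U$ and $F_*<F(x_k)<F_*+\varepsilon$, so the KL inequality applies along the tail. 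The key quantitative step is then a lower bound of the form $\dist(0,\partial F(x_k))\le C(\|\hat\eta_{x_{k-1}}\|+\|\hat\eta_{x_k}\|+\varepsilon_{k-1}^2)$: this comes from the (approximate) optimality condition for $\eta_{x_k}^*$ in the proximal subproblem~\eqref{e61}, namely $0\in\grad f(x_k)+\tilde L\,\eta_{x_k}^*+(\text{pullback of }\partial g\text{ at }R_{x_k}(\eta_{x_k}^*))$, transported back to a subgradient of $F$ at $x_{k+1}$ using the vector-transport identities, the bounds~\eqref{e10}–\eqref{e11}, local Lipschitz continuity of $\grad f$ (Assumption~\ref{as19}), and local Lipschitzness of $g\circ R$ (Assumption~\ref{as04}); the inexactness contributes only the extra $\varepsilon_{k-1}^2$ term via~\eqref{e28}. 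This is exactly where the analysis departs from~\cite{HuaWei2019b}, and I expect it to be the main obstacle: one must carefully verify that replacing $\eta_{x_{k-1}}^*$ by $\hat\eta_{x_{k-1}}$ (so that $x_k=R_{x_{k-1}}(\hat\eta_{x_{k-1}})$, not $R_{x_{k-1}}(\eta_{x_{k-1}}^*)$) only perturbs the constructed subgradient by $O(\|\hat\eta_{x_{k-1}}-\eta_{x_{k-1}}^*\|)=O(\varepsilon_{k-1}^2)$, which requires smoothness of $R$ and of the vector transport on the compact set.

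Finally I would close the argument with the telescoping/concavity trick. Using concavity of $\varsigma$, for large $k$,
\[
\varsigma(F(x_k)-F_*)-\varsigma(F(x_{k+1})-F_*)\ \ge\ \varsigma'(F(x_k)-F_*)\bigl(F(x_k)-F(x_{k+1})\bigr)\ \ge\ \frac{F(x_k)-F(x_{k+1})}{\dist(0,\partial F(x_k))}.
\]
Combining the descent inequality $F(x_k)-F(x_{k+1})\ge\beta\|\hat\eta_{x_k}\|^2$ with the subgradient bound above and writing $\Delta_k:=\varsigma(F(x_k)-F_*)-\varsigma(F(x_{k+1})-F_*)$, one gets $\|\hat\eta_{x_k}\|^2\le C'\Delta_k(\|\hat\eta_{x_{k-1}}\|+\|\hat\eta_{x_k}\|+\varepsilon_{k-1}^2)$, and then the arithmetic–geometric mean inequality $2\sqrt{ab}\le a+b$ yields $\|\hat\eta_{x_k}\|\le\frac12\|\hat\eta_{x_{k-1}}\|+C''\Delta_k+C''\varepsilon_{k-1}^2$ (absorbing the $\varepsilon_{k-1}^2$ term either directly or after one more AM–GM split). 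Summing over $k$, the $\sum_k\Delta_k$ telescopes (finite since $\varsigma\ge0$), the self-referential $\tfrac12\sum\|\hat\eta_{x_{k-1}}\|$ term is absorbed into the left side, and $\sum_k\varepsilon_{k-1}^2\le\sum_k\varepsilon_{k-1}<\infty$ by hypothesis; hence $\sum_k\|\hat\eta_{x_k}\|<\infty$, and therefore $\sum_k\dist(x_k,x_{k+1})\le\varkappa_1\sum_k\|\hat\eta_{x_k}\|<\infty$ by~\eqref{e10}. A Cauchy-sequence argument then forces $\{x_k\}$ to converge, so $\mathcal S$ is a singleton.
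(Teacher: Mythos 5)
There is a genuine gap at the step you yourself flag as the main obstacle, and it is not repairable in the form you propose. You want the KL inequality evaluated at the iterate $x_k=R_{x_{k-1}}(\hat\eta_{x_{k-1}})$, and you claim a bound $\dist(0,\partial F(x_k))\le C(\|\hat\eta_{x_{k-1}}\|+\|\hat\eta_{x_k}\|+\varepsilon_{k-1}^2)$ by taking the small subgradient produced by the exact stationarity of $\eta_{x_{k-1}}^*$ (which lives in $\partial F\bigl(R_{x_{k-1}}(\eta_{x_{k-1}}^*)\bigr)$) and arguing that replacing $\eta_{x_{k-1}}^*$ by $\hat\eta_{x_{k-1}}$ "only perturbs the constructed subgradient by $O(\varepsilon_{k-1}^2)$", invoking smoothness of $R$ and the transport. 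Smoothness of $R$ is not the issue: the obstruction is that the generalized subdifferential of the nonsmooth term $g$ is not inner semicontinuous, so a small element of $\partial F(y)$ at $y=R_{x_{k-1}}(\eta_{x_{k-1}}^*)$ need not have any nearby counterpart in $\partial F(x_k)$ even when $\dist(x_k,y)=O(\varepsilon_{k-1}^2)$. Already in one dimension with $g(t)=|t|$: at $y=0$ one has $0\in\partial g(0)=[-1,1]$, while at $x=\varepsilon^2>0$ one has $\partial g(x)=\{1\}$, so $\dist(0,\partial g(x))=1$ no matter how small $\varepsilon$ is. Hence $\dist(0,\partial F(x_k))$ can stay bounded away from zero while your claimed bound tends to zero, and the KL inequality applied at $x_k$ does not deliver the estimate $\|\hat\eta_{x_k}\|^2\le C'\Delta_k(\cdots)$ that your telescoping needs.

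The paper's proof is organized precisely to avoid this: the KL inequality is applied at the auxiliary points $R_{x_k}(\eta_{x_k}^*)$, where exact stationarity of $\eta_{x_k}^*$ legitimately produces a subgradient of norm at most $L_c\|\eta_{x_k}^*\|$ (via the transport identity and the Lipschitz gradient of $f$), and the mismatch between the iterate $x_k$ and the auxiliary point is then handled purely at the level of \emph{function values}: one splits into the two cases $F(x_k)\le F(R_{x_{k-1}}(\eta_{x_{k-1}}^*))$ (where concavity of $\varsigma$ lets one pass from one argument to the other directly) and $F(x_k)>F(R_{x_{k-1}}(\eta_{x_{k-1}}^*))$ (where Assumption~\ref{as04} gives $|F(x_k)-F(R_{x_{k-1}}(\eta_{x_{k-1}}^*))|\le L_F\varepsilon_{k-1}^2$). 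Function values of $F\circ R$ are Lipschitz-stable under the $O(\varepsilon_{k-1}^2)$ perturbation even though subgradients are not, and this yields the inequality~\eqref{e06} with the extra $b_1\varepsilon_{k-1}^2$ term. Your remaining skeleton — descent from Lemma~\ref{le3}, compactness of $\mathcal{S}$ and the uniform desingularising function, the two AM--GM splits, telescoping, and absorbing $\sum_k\varepsilon_{k-1}+\sum_k\varepsilon_{k-1}^2<\infty$ — matches the paper and would go through once the subgradient step is replaced by this two-case, function-value argument.
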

\begin{proof}
First note that the global convergence result in Theorem~\ref{globaltheo} implies that every point in $\mathcal{S}$
is a stationary point. Since  $\lim_{k \rightarrow \infty} \|\hat{\eta}_{x_k}\| = 0$, there exists a $\delta_T>0$ such that $\|\hat{\eta}_{x_k}\| \leq \delta_T$ for all $k$. Thus, the application of~\cite[Lemma~6]{HuaWei2019b} implies that
\begin{align*}
\dist(x_{k},x_{k+1})=\dist(x_k,R_{x_k}(\hat{\eta}_{x_k}))\leq\kappa \|\hat{\eta}_{x_k}\| \rightarrow 0.\numberthis\label{eq:kwrev01a}
\end{align*}
Then by \cite[Remark 5]{BST2014}, we know that $\mathcal{S}$ is a compact set.
Moreover, since $F(x_k)$ is nonincreasing and $F$ is continuous, $F$ has the same value at all the points in~$\mathcal{S}$.  Therefore, by~\cite[Lemma~5]{HuaWei2019b}, there exists a single  desingularising function, denoted $\varsigma$, for the Riemannian KL property  of $F$ to hold at all the points in $\mathcal{S}$.

Let $x_*$ be a point in $\mathcal{S}$. Assume there exists $\bar k$ such that $x_{\bar k}=x_*$. Since $F(x_k)$ is non-increasing, it must hold $F(x_{\bar k})=F(x_{{\bar k}+1})$. By Lemma~\ref{le3}, we have $\eta_{x_{\bar k}}^*=0$,  $x_{\bar k}=x_{{\bar k}+1}$, \eqref{eq:finite} holds evidently. 

In the case when $F(x_k)>F(x_*)$ for all $k$, Since $\eta_{x_k}^* \rightarrow 0$, we have $F(R_{x_k}(\eta_{x_k}^*)) \rightarrow F(x_*)$, $\dist(R_{x_k}(\eta_{x_k}^*), \mathcal{S}) \rightarrow 0$. By the Riemannian KL property of $F$ on $\mathcal{S}$, there exists an $l>0$  such that 
\begin{align*}
\varsigma'(F(R_{x_k}(\eta_{x_k}^*)) - F(x_*)) \dist(0, \partial F(R_{x_k}(\eta_{x_k}^*))) \geq 1\quad\mbox{for all }k>l. 
\end{align*}
It follows that 
\begin{align*}
\varsigma'(F(R_{x_k}(\eta_{x_k}^*)) - F(x_*))\geq \dist(0, \partial F(R_{x_k}(\eta_{x_k}^*)))^{-1}\quad\mbox{for all }k>l.\numberthis\label{eq:kwrev01}
\end{align*}
Since $\lim_{k \rightarrow \infty} \|\eta_{x_{k}}^*\| = 0$, there exists a constant $k_0 > 0$ such that $\|\eta_{x_{k}}^*\| < \mu$ for all $k > k_0$, where $\mu$ is defined in \cite[Lemma~7]{HuaWei2019b}. By Assumption~\ref{as19} and~\cite[Lemma~7]{HuaWei2019b}, we have
\begin{align} \label{e96}
\|\grad f(R_{x_{k}}(\eta_{x_{k}}^*)) - \mathcal{T}_{R_{\eta_{x_{k}}^*}}^{- \sharp} (\grad f(x_{k}) + \tilde{L} \eta_{x_{k}}^*)\| \leq L_c \|\eta_{x_{k}}^*\|
\end{align}
for all $k \geq k_0$, where $L_c$ is a constant. \kwrev{By the definition of $\eta_{x_{k}}^*$, there exists $\zeta_{x_{k}} \in \partial g( R_{x_{k}} (\eta_{x_{k}}^*) )$ such that
\begin{equation} 
	\grad f(x_{k}) + \tilde{L} \eta_{x_{k}}^* + \mathcal{T}_{R_{\eta_{x_{k}}^*}}^\sharp \zeta_{x_{k}} = 0.
\end{equation} }
It follows that
\begin{align}
	\grad f(R_{x_{k}}(\eta_{x_{k}}^*)) -& \mathcal{T}_{R_{\eta_{x_{k}}^*}}^{- \sharp} (\grad f(x_{k}) + \tilde{L}  \eta_{x_{k}}^*) \nonumber \\
	&= \grad f(R_{x_{k}}(\eta_{x_{k}}^*)) + \zeta_{x_{k}} \in \partial F(R_{x_{k}}(\eta_{x_{k}}^*)).  \label{e97}
\end{align}
Therefore,~\eqref{e96} and~\eqref{e97} yield
\begin{equation} \label{e66}
\dist(0, \partial F(R_{x_{k}}(\eta_{x_{k}}^*))) \leq L_c \|\eta_{x_{k}}^*\|,
\end{equation}
for all $k > k_0$. Inserting this into \eqref{eq:kwrev01} gives 
\begin{align*}
\varsigma'(F(R_{x_k}(\eta_{x_k}^*)) - F(x_*))\geq L_c^{-1}\|\eta_{x_{k}}^*\|^{-1}\quad\mbox{for all }k>\hat{l}:=\max(k_0,l).\numberthis\label{eq:kerev02}
\end{align*}
Define $\Delta_{p, q} = \varsigma(F(x_{p})-F(x_*))-\varsigma(F(x_{q})-F(x_*))$. \whnew{[WH: I updated the proofs below slightly.]}{} We next show that for sufficiently large $k$,
\begin{equation} \label{e06}
\|\hat{\eta}_{x_{k}}\|^2 \leq b_0 \Delta_{k, k+1} (\|\hat{\eta}_{x_{k-1}}\| + \varepsilon_{k-1}^2) + b_1 \varepsilon_{k-1}^2,
\end{equation}
where $b_0 = \frac{L_c}{\beta}$, $b_1 = \frac{L_F}{\beta}$, and $L_F$ is the Lipschitz constant of $F$. To the end, we consider two cases:
\begin{itemize}
\item Case 1: $F(x_k) = F(R_{x_{k-1}}(\hat{\eta}_{x_{k-1}})) \leq F(R_{x_{k-1}}(\eta_{x_{k-1}}^*))$. \\
We have
\begin{align*}
\varsigma(F(x_{k})-F(x_*))-&\varsigma(F(x_{k+1})-F(x_*)) \\
&\geq \varsigma'(F(x_{k})-F(x_*))(F(x_{k})-F(x_{k+1})) \\
&\geq \varsigma'(F(R_{x_{k-1}}(\eta_{x_{k-1}}^*))-F(x_*))(F(x_{k})-F(x_{k+1}))\\
&\geq L_c^{-1}\beta \frac{\|\hat{\eta}_{x_{k}}\|^2}{\|\eta_{x_{k-1}}^*\|} \\
&\geq \frac{\beta}{L_c} \frac{\|\hat{\eta}_{x_{k}}\|^2}{\whnew{}{(\|\hat{\eta}_{x_{k-1}}\| + \varepsilon_{k-1}^2)}} \quad\mbox{for all }k>\hat{l}:=\max(k_0,l), 
\end{align*}
where the first and the second inequalities are from the concavity of $\varsigma$, the third inequality is from~\eqref{eq:kerev02} and~\eqref{e9}, and the last inequality is from~\eqref{e28}. It follows that
\[
\|\hat{\eta}_{x_{k}}\|^2 \leq \frac{L_c}{\beta} \Delta_{k, k+1} (\|\hat{\eta}_{x_{k-1}}\| + \varepsilon_{k-1}^2),
\]
which implies that~\eqref{e06} holds.
\item Case 2: $F(x_k) = F(R_{x_{k-1}}(\hat{\eta}_{x_{k-1}})) > F(R_{x_{k-1}}(\eta_{x_{k-1}}^*))$. \\
We have
\begin{align}
&\qquad \varsigma(F(x_{k})-F(x_*))-\varsigma(F(x_{k+1})-F(x_*)) \nonumber \\
&\geq \varsigma(F(R_{x_{k-1}}(\eta_{x_{k-1}}^*))-F(x_*))-\varsigma(F(x_{k+1})-F(x_*)) \nonumber \\
&\geq \varsigma'(F(R_{x_{k-1}}(\eta_{x_{k-1}}^*)) - F(x_*))(F(R_{x_{k-1}}(\eta_{x_{k-1}}^*))-F(x_{k+1})) \nonumber \\
&= \varsigma'(F(R_{x_{k-1}}(\eta_{x_{k-1}}^*))-F(x_*)) \Big(F(R_{x_{k-1}}(\eta_{x_{k-1}}^*))  \nonumber\\
&\qquad \qquad - F(R_{x_{k-1}}(\hat{\eta}_{x_{k-1}})) + F(x_k) -F(x_{k+1}) \Big) \nonumber \\
&\geq \frac{\beta \|\hat{\eta}_{x_{k}}\|^2 - L_F \|{\eta}_{x_{k-1}}^* - \hat{\eta}_{x_{k-1}}\|}{L_c\|\eta_{x_{k-1}}^*\|} \quad\mbox{for all }k>\hat{l}:=\max(k_0,l) \nonumber \\
&\geq \frac{\beta \|\hat{\eta}_{x_{k}}\|^2 - L_F \varepsilon_{k-1}^2}{L_c(\|\hat{\eta}_{x_{k-1}}\| + \varepsilon_{k-1}^2)}, \label{e03}
\end{align}
where the third inequality is from Assumption~\ref{as04} with Lipschitz constant denoted by $L_F$ and the last inequality is from~\eqref{e28}.
Together with~\eqref{e28}, inequality~\eqref{e03} yields that for all $k > \hat{\ell}$,
\begin{align} \label{e04}
\beta \|\hat{\eta}_{x_{k}}\|^2 \leq L_c \Delta_{k, k+1} (\|\hat{\eta}_{x_{k-1}}\| + \varepsilon_{k-1}^2) + L_F \varepsilon_{k-1}^2,
\end{align}
which gives
\begin{equation} \label{e05}
\|\hat{\eta}_{x_{k}}\|^2 \leq \frac{L_c}{\beta} \Delta_{k, k+1} (\|\hat{\eta}_{x_{k-1}}\| + \varepsilon_{k-1}^2) + \frac{L_F}{\beta} \varepsilon_{k-1}^2.
\end{equation}
which implies that~\eqref{e06} holds.
\end{itemize}
Once \eqref{e06} has been established, by $\sqrt{a^2 + b^2} \leq a + b$ and $2 \sqrt{a b} \leq a + b$, we have
\begin{equation} \label{e07}
2 \|\hat{\eta}_{x_{k}}\| \leq  b_0 \Delta_{k, k+1} + \|\hat{\eta}_{x_{k-1}}\| + \varepsilon_{k-1}^2 + 2 \sqrt{b_1} \varepsilon_{k-1}. 
\end{equation}
For any $p > \hat{l}$, taking summation of~\eqref{e07} from $p$ to $s$ yields
\begin{align*}
\sum_{k = p}^s 2 \|\hat{\eta}_{x_{k}}\|  \leq \sum_{k = p}^s b_0 \Delta_{k, k+1} + \sum_{k = p}^s \|\hat{\eta}_{x_{k-1}}\| + 2 \sqrt{b_1} \sum_{k = p}^s \varepsilon_{k-1} + \sum_{k=p}^s \varepsilon_{k-1}^2, 
\end{align*}
which implies
\begin{equation*}
\sum_{k = p}^s \|\hat{\eta}_{x_{k}}\| \leq \|\hat{\eta}_{x_{p-1}}\| + b_0 \Delta_{p, s+1} + 2 \sqrt{b_1} \sum_{k = p}^s \varepsilon_{k-1} + \sum_{k=p}^s \varepsilon_{k-1}^2.
\end{equation*}
Taking $s$ to $\infty$ yields
\begin{equation} \label{e08}
\sum_{k = p}^{\infty} \|\hat{\eta}_{x_{k}}\| \leq \|\hat{\eta}_{x_{p-1}}\| + b_0 \varsigma(F(x_{p})-F(x_*)) + 2 \sqrt{b_1} \sum_{k = p}^{\infty} \varepsilon_{k-1} + \sum_{k=p}^{\infty} \varepsilon_{k-1}^2.
\end{equation}
It follows that $\sum_{k=0}^\infty\|\hat{\eta}_{x_{k}}\|<\infty$, which yields~\eqref{eq:finite} due to \eqref{eq:kwrev01a}.
\end{proof}

Theorem~\ref{LocalRateKL} gives the local convergence rate based on the Riemannian KL property. Note that the local convergence rate requires an even more accurate solution than that in Theorem~\ref{single}. 
\begin{theorem} \label{LocalRateKL}
Let $\{x_k\}$ denote the sequence generated by Algorithm~\ref{a1} and $\mathcal{S}$ denote the set of all accumulation points.
Suppose Assumptions~\ref{as10}, ~\ref{as3}, ~\ref{as19}, and~\ref{as04} hold.  
We further assume that $F = f + g$ satisfies the Riemannian KL property at every point in $\mathcal{S}$ with the desingularising function having the form of $\varsigma(t) = \frac{C}{\theta} t^{\theta}$ for some $C > 0$, $\theta \in (0, 1]$. The accumulation point, denoted $x_*$, is unique by Theorem~\ref{single}. 
If the Riemannian proximal mapping~\eqref{e23} is solved such that 
for all $k$, \whnew{[WH: below $q$ and corresponding proofs are  modified.]}{}
\begin{align} \label{e29}
\|\hat{\eta}_{x_k} - {\eta}_{x_k}^*\| \leq \min\left( \varepsilon_k^2, \frac{\beta}{2 L_F} \|\hat{\eta}_{x_k}\|^2 \right),
\end{align}
\whcomm{}{that is, $q(\varepsilon_k, \|\hat{\eta}_{x_k}\|) = \min\left( \varepsilon_k^2, \frac{\beta}{2 L_F} \|\hat{\eta}_{x_k}\|^2 \right)$.}
Then
\begin{itemize}
\item \whcomm{}{If $\theta = 1$, then there exists $k_1$ such that $x_k = x_*$ for all $k > k_1$.}
\item if $\theta \in [\frac{1}{2}, 1)$, then there exist constants $C_r > 0$ and $d \in (0, 1)$ such that for all~$k$
\[
\dist(x_k, x_*) < C_r d^{k};
\]
\item if $\theta \in (0, \frac{1}{2})$, then there exists a positive constant $\tilde{C}_r$ such that for all $k$
\[
\dist(x_k, x_*) < \tilde{C}_r k^{\frac{-1}{1 - 2 \theta}}.
\]
\end{itemize}
\end{theorem}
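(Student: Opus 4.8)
Since~\eqref{e29} is stronger than~\eqref{e28}, Theorem~\ref{single} already applies and gives $\sum_k\|\hat\eta_{x_k}\|<\infty$ (so $S_p:=\sum_{k\ge p}\|\hat\eta_{x_k}\|\to0$, $x_k\to x_*$, $\|\eta_{x_k}^*\|\to0$); the task is therefore only to quantify the rate of $S_p\to0$, after which $\dist(x_k,x_*)\le\kappa S_k$ (from $\dist(x_k,x_{k+1})\le\kappa\|\hat\eta_{x_k}\|$, \cite[Lemma~6]{HuaWei2019b}) transfers everything to the iterates. The plan is to run the classical Kurdyka--\L ojasiewicz rate machinery (as in~\cite{AB2009,ABRS2010,BST2014,LL2015}) on top of the estimates already built in the proof of Theorem~\ref{single}, exploiting what the stronger accuracy~\eqref{e29} buys. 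I would reuse: the relative--error bound $\dist(0,\partial F(R_{x_{k}}(\eta_{x_{k}}^*)))\le L_c\|\eta_{x_{k}}^*\|$ for large $k$ (the analogue of~\eqref{e66}); the sufficient decrease~\eqref{e9}; the single desingularising function $\varsigma$ valid at every point of $\mathcal{S}$; the KL--derived bound $\varsigma'(F(R_{x_{k-1}}(\eta_{x_{k-1}}^*))-F(x_*))\ge L_c^{-1}\|\eta_{x_{k-1}}^*\|^{-1}$ (the analogue of~\eqref{eq:kerev02}); and the two--case split according to whether $F(x_k)\le F(R_{x_{k-1}}(\eta_{x_{k-1}}^*))$. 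The only new ingredient is that, besides $\|\hat\eta_{x_{k-1}}-\eta_{x_{k-1}}^*\|\le\varepsilon_{k-1}^2$, condition~\eqref{e29} also supplies $\|\hat\eta_{x_{k-1}}-\eta_{x_{k-1}}^*\|\le\frac{\beta}{2L_F}\|\hat\eta_{x_{k-1}}\|^2$. Feeding this quadratic bound into Case~2 in place of the $\varepsilon_{k-1}^2$ bound that produced the stray $L_F\varepsilon_{k-1}^2$ in Theorem~\ref{single}, and keeping Case~1 unchanged, should upgrade~\eqref{e06} to, for all large $k$,
\[
\|\hat\eta_{x_{k}}\|^2\ \le\ \tfrac{L_c}{\beta}\,\Delta_{k,k+1}\bigl(\|\hat\eta_{x_{k-1}}\|+\varepsilon_{k-1}^2\bigr)+\tfrac12\|\hat\eta_{x_{k-1}}\|^2 ,
\]
with $\Delta_{k,k+1}=\varsigma(F(x_k)-F(x_*))-\varsigma(F(x_{k+1})-F(x_*))$; the gain is that the perturbation is now quadratic in $\|\hat\eta_{x_{k-1}}\|$ with an absolute constant strictly below $1$.

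Next I would take square roots (using $\sqrt{a+b+c}\le\sqrt a+\sqrt b+\sqrt c$), split $\sqrt{\tfrac{L_c}{\beta}\Delta_{k,k+1}\,\|\hat\eta_{x_{k-1}}\|}$ by Young's inequality $2\sqrt{uv}\le\lambda u+\lambda^{-1}v$ with $\lambda$ so large that $\rho:=\tfrac1{\sqrt2}+\tfrac1{2\lambda}<1$, and bound $\sqrt{\tfrac{L_c}{\beta}\Delta_{k,k+1}}\,\varepsilon_{k-1}\le\tfrac12\bigl(\tfrac{L_c}{\beta}\Delta_{k,k+1}+\varepsilon_{k-1}^2\bigr)$, obtaining $\|\hat\eta_{x_{k}}\|\le\rho\|\hat\eta_{x_{k-1}}\|+c_0\Delta_{k,k+1}+\tfrac12\varepsilon_{k-1}^2$ for large $k$. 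Summing from $p$ to $s$, telescoping $\sum\Delta_{k,k+1}$ into $\varsigma(F(x_p)-F(x_*))$, absorbing $\rho\sum\|\hat\eta_{x_{k-1}}\|$ on the left (legitimate since $\rho<1$), and letting $s\to\infty$ gives a tail--sum bound
\[
S_p\ \le\ c_1\|\hat\eta_{x_{p-1}}\|+c_2\,\varsigma\bigl(F(x_p)-F(x_*)\bigr)+c_3\sum_{k\ge p}\varepsilon_{k-1}^2 ,
\]
and since $\|\hat\eta_{x_{p-1}}\|=S_{p-1}-S_p$ it remains to convert this into a self--referential inequality for $S_p$.

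To do that I would bound $\varsigma(F(x_p)-F(x_*))$ through $S_{p-1}-S_p$: with $y_{p-1}=R_{x_{p-1}}(\eta_{x_{p-1}}^*)$, the Riemannian KL inequality at $y_{p-1}$ (for $p$ large, as in Theorem~\ref{single}) together with the relative--error bound gives $(F(y_{p-1})-F(x_*))^{1-\theta}\le CL_c\|\eta_{x_{p-1}}^*\|\le CL_c(\|\hat\eta_{x_{p-1}}\|+\varepsilon_{p-1}^2)$, while Assumption~\ref{as04} gives $|F(x_p)-F(y_{p-1})|\le L_F\varepsilon_{p-1}^2$; plugging in $\varsigma(t)=\frac{C}{\theta}t^{\theta}$ and $(a+b)^{\theta}\le a^{\theta}+b^{\theta}$ yields $\varsigma(F(x_p)-F(x_*))\lesssim(\|\hat\eta_{x_{p-1}}\|+\varepsilon_{p-1}^2)^{\theta/(1-\theta)}+\varepsilon_{p-1}^{2\theta}$, so that, for $p$ large, $S_p\lesssim(S_{p-1}-S_p)+(S_{p-1}-S_p)^{\theta/(1-\theta)}+\mathcal{E}_p$ with $\mathcal{E}_p$ an error term built from the $\varepsilon_k$'s. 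The three regimes then follow by the standard case analysis. If $\theta=1$, $\varsigma$ is linear ($\varsigma'\equiv C$) and the KL inequality $C\,\dist(0,\partial F(y_{p-1}))\ge1$ is incompatible with $\dist(0,\partial F(y_{p-1}))\le L_c\|\eta_{x_{p-1}}^*\|\to0$, which (as in the Euclidean theory and the $x_{\bar k}=x_*$ branch of the proof of Theorem~\ref{single}, using the second branch of~\eqref{e29} to pass from $\eta_{x_{p-1}}^*=0$ to $\hat\eta_{x_{p-1}}=0$) forces $x_k=x_*$ for all large $k$. If $\theta\in[\tfrac12,1)$, then $\theta/(1-\theta)\ge1$, so $(S_{p-1}-S_p)^{\theta/(1-\theta)}$ is dominated by $S_{p-1}-S_p$ for $p$ large, giving $S_p\le c(S_{p-1}-S_p)$, hence $S_p\le\frac{c}{1+c}S_{p-1}$, geometric decay, and $\dist(x_k,x_*)<C_r d^{k}$. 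If $\theta\in(0,\tfrac12)$, then $S_p\lesssim(S_{p-1}-S_p)^{\theta/(1-\theta)}$, i.e. $S_p^{(1-\theta)/\theta}\lesssim S_{p-1}-S_p$, and the standard monotone--sequence lemma (see, e.g., \cite{AB2009,BST2014}) gives the polynomial decay of $S_p$ claimed in the statement, which transfers to $\dist(x_k,x_*)$.

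The step I expect to be the main obstacle is controlling the error term $\mathcal{E}_p$ in the sublinear regime: under $\sum_k\varepsilon_k<\infty$ alone a bare quantity such as $\varepsilon_{p-1}^{2\theta}$ (with $2\theta<1$) need not be summable, let alone small at the rate attained in the exact case. The remedy is to prefer, wherever a bare $\varepsilon_{p-1}^2$ would enter the estimate of $\varsigma(F(x_p)-F(x_*))$, the second branch $\frac{\beta}{2L_F}\|\hat\eta_{x_{p-1}}\|^2$ of the minimum in~\eqref{e29}, so that it is replaced by the higher--order, automatically controlled $\|\hat\eta_{x_{p-1}}\|^2=(S_{p-1}-S_p)^2$; one then has to check that this substitution neither re--introduces a left--hand term with coefficient $\ge1$ nor weakens the exponent of the dominant $(S_{p-1}-S_p)^{\theta/(1-\theta)}$ term. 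Getting this interplay between the inexactness and the KL exponent right is the delicate point; the remainder is the routine KL rate computation.
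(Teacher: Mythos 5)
Your overall route is the paper's route: reuse the machinery of Theorem~\ref{single} (the bound \eqref{e66}, the descent estimate \eqref{e9}, the KL inequality \eqref{eq:kerev02}, the two-case split), exploit the quadratic branch of \eqref{e29} to upgrade \eqref{e06}, sum and telescope to get a self-referential inequality for the tails $S_p=\sum_{k\ge p}\|\hat\eta_{x_k}\|$, transfer to $\dist(x_k,x_*)$ via $\dist(x_k,x_{k+1})\le\kappa\|\hat\eta_{x_k}\|$, and conclude by the standard KL rate analysis; your $\theta=1$ argument (KL with linear $\varsigma$ forces $\|\eta^*_{x_k}\|$ bounded below, which via the second branch of \eqref{e29} contradicts $\|\hat\eta_{x_k}\|\to0$, hence finite termination) is also the paper's.

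The genuine gap is that, as written, your intermediate estimates still carry the $\varepsilon_k$'s: the upgraded recursion keeps $\varepsilon_{k-1}^2$ inside the $\Delta_{k,k+1}$ product, and after Young's inequality and summation you arrive at $S_p\le c_1\|\hat\eta_{x_{p-1}}\|+c_2\varsigma(F(x_p)-F(x_*))+c_3\sum_{k\ge p}\varepsilon_{k-1}^2$. Under the only available hypothesis $\sum_k\varepsilon_k<\infty$, the tail $\sum_{k\ge p}\varepsilon_{k-1}^2$ tends to zero at an uncontrolled (in general sub-geometric, sub-polynomial-of-the-right-order) rate, so from this inequality you cannot extract $S_p\lesssim d^p$ for $\theta\in[\tfrac12,1)$, nor the $k^{-1/(1-2\theta)}$ rate for $\theta\in(0,\tfrac12)$. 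You do name the correct cure in your last paragraph, but you restrict it to the estimate of $\varsigma(F(x_p)-F(x_*))$ and present it as an obstacle only in the sublinear regime; in fact the $\varepsilon$-terms poison the geometric regime just as much, and the substitution must be made at \emph{every} occurrence of the inexactness: both where $\|\eta^*_{x_{k-1}}\|$ is bounded (use $\|\eta^*_{x_{k-1}}\|\le\|\hat\eta_{x_{k-1}}\|+\tfrac{\beta}{2L_F}\|\hat\eta_{x_{k-1}}\|^2\le(1+\delta)\|\hat\eta_{x_{k-1}}\|$ for large $k$) and where the penalty $L_F\|\hat\eta_{x_{k-1}}-\eta^*_{x_{k-1}}\|\le\tfrac\beta2\|\hat\eta_{x_{k-1}}\|^2$ enters Case~2. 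This is exactly what the paper does from the start: it never lets an $\varepsilon$ survive, obtains the clean recursion $\|\hat\eta_{x_k}\|^2\le\tilde b_0\,\Delta_{k,k+1}\|\hat\eta_{x_{k-1}}\|$ (the quadratic remainder being absorbed with coefficient $\tfrac12$), hence $S_p\le\|\hat\eta_{x_{p-1}}\|+c\,\|\hat\eta_{x_{p-1}}\|^{\min(2\theta,\,\theta/(1-\theta))}$ with no error term, from which the three regimes follow as in \cite{AB2009}. Once you apply your own remedy uniformly (and verify, as you indicate, that the absorbed coefficient stays below one), your argument closes and coincides with the paper's.
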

\begin{proof}
\whcomm{}{In the case of $\theta = 1$, suppose $F(x_k) > F(x_*)$. It follows from~\eqref{eq:kwrev01} and~\eqref{e66} that
\begin{align*}
\dist(0, \partial F(R_{x_k}(\eta_{x_k}^*))) &\geq C \quad \mbox{for all }k>l, \\
\dist(0, \partial F(R_{x_{k}}(\eta_{x_{k}}^*))) &\leq L_c \|\eta_{x_{k}}^*\| \quad \mbox{for all } k > k_0.
\end{align*} 
Therefore, we have $\|\eta_{x_k}^*\| \geq C / L_c \quad \hbox{for all } k > \max(k_0, l)$.
By~\eqref{e01}, there exists $k_2>0$ and $\hat{C} > 0$ such that $\|\hat{\eta}_{x_k}\| \geq \hat{C} \|\eta_{x_k}^*\|$ for all $k \geq k_2$. It follows that
\[
\|\hat{\eta}_{x_k}\| \geq \hat{C} C / L_c, \quad \hbox{for all } k > \max(k_0, k_2, l).
\]
Due to the descent property in~\eqref{e9}, there must exist $k_1$ such that $x_k = x_*$ for all $k > k_1$.
}

\whcomm{}{Next, we consider $\theta \in (0, 1)$.}
By the same derivation as the proof in Theorem~\ref{single} and noting the difference between~\eqref{e29} and~\eqref{e28}, we obtain from~\eqref{e06} that
\[
\|\hat{\eta}_{x_{k}}\|^2 \leq b_0 \Delta_{k, k+1} (\|\hat{\eta}_{x_{k-1}}\| + \frac{\beta}{2 L_F} \|\hat{\eta}_{x_k}\|^2) + b_1 \frac{\beta}{2 L_F} \|\hat{\eta}_{x_k}\|^2,
\]
by replacing $\varepsilon_{k-1}$ with $\frac{\beta}{2 L_F} \|\hat{\eta}_{x_k}\|$.
Since $\|\hat{\eta}_{x_k}\| \rightarrow 0$, for any $\delta > 0$, there exists $k_2 > 0$ such that for all $k> k_2$, it holds that $1 + \beta / (2 L_F) \|\hat{\eta}_{x_k}\| < \delta$. Therefore, we have
\[
\|\hat{\eta}_{x_{k}}\|^2 \leq b_0 \Delta_{k, k+1} (1 + \delta) \|\hat{\eta}_{x_{k-1}}\| + \frac{1}{2} \|\hat{\eta}_{x_k}\|^2.
\]
By $2\sqrt{a b} \leq a + b$, we have
\[
2 \|\hat{\eta}_{x_{k}}\| \leq \tilde{b}_0 \Delta_{k, k+1}  + \|\hat{\eta}_{x_{k-1}}\|
\]
where $\tilde{b}_0 = 2 b_0 (1 + \delta)$.
It follows that
\begin{equation} \label{e31}
\sum_{k = p}^{\infty} \|\hat{\eta}_{x_{k}}\| \leq \|\hat{\eta}_{x_{p-1}}\| + \tilde{b}_0 \varsigma(F(x_{p})-F(x_*)).
\end{equation}
Substituting $\varsigma(t) = \frac{C}{\theta} t^{\theta}$ into~\eqref{e31} yields
\begin{equation} \label{e17}
\sum_{k = p}^{\infty} \|\hat{\eta}_{x_{k}}\| \leq \|\hat{\eta}_{x_{p-1}}\| + \frac{\tilde{b}_0 C}{\theta} (F(x_{p})-F(x_*))^{\theta}.
\end{equation}
By Assumption~\ref{as04} and~\eqref{e29}, we have
\begin{align}
|F(x_{p})-F( R_{x_{p-1}} ( {\eta}_{x_{p-1}}^* ))| =& |F( R_{x_{p-1}} ( \hat{\eta}_{x_{p-1}} )) - F( R_{x_{p-1}} ( {\eta}_{x_{p-1}}^* ))| \nonumber \\
\leq& L_F \| \hat{\eta}_{x_{p-1}} - {\eta}_{x_{p-1}}^* \| \leq \frac{\beta}{2} \|\hat{\eta}_{x_{p-1}}\|^2. \label{e18}
\end{align}
Combining~\eqref{e17} and~\eqref{e18} yields
\begin{equation} \label{e103}
\sum_{k = p}^{\infty} \|\hat{\eta}_{x_k}\| \leq \|\hat{\eta}_{x_{p-1}}\| + \frac{\tilde{b}_0 C}{\theta} \left( F( R_{x_{p-1}} ( {\eta}_{x_{p-1}}^* ) ) - F(x_*) + \frac{\beta}{2} \|\hat{\eta}_{x_{p-1}}\|^2 \right)^{\theta}.
\end{equation}

By~\eqref{eq:kwrev01}, we have $ \frac{1}{C} (F( R_{x_{p-1}} ( {\eta}_{x_{p-1}}^* ) ) - F(x_*))^{1 - \theta} \leq \dist(0, \partial F( R_{x_{p-1}} ( {\eta}_{x_{p-1}}^* ) ) $. Combining this inequality with~\eqref{e66} yields 
\begin{equation} \label{e102}
\frac{1}{C} (F( R_{x_{p-1}} ( {\eta}_{x_{p-1}}^* ) ) - F(x_*))^{1 - \theta} \leq L_c \|{\eta}_{x_{p-1}}^*\|.
\end{equation}
It follows from~\eqref{e103} and~\eqref{e102} that
\begin{align}
\sum_{k = p}^{\infty} \|\hat{\eta}_{x_k}\| \leq& \|\hat{\eta}_{x_{p-1}}\| + \frac{\tilde{b} C}{\theta} \left( (C L_c  \|{\eta}_{x_{p-1}}^*\|)^{\frac{1}{1 - \theta}} + \frac{\beta}{2} \|\hat{\eta}_{x_{p-1}}\|^2 \right)^{\theta} \nonumber  \\
\leq& \|\hat{\eta}_{x_{p-1}}\| + \frac{\tilde{b}_0 C}{\theta} \left( \left(C L_c (1+\delta)  \|\hat{\eta}_{x_{p-1}}\|\right)^{\frac{1}{1 - \theta}} + \frac{\beta}{2} \|\hat{\eta}_{x_{p-1}}\|^2 \right)^{\theta}. \label{e104}
\end{align}
Since $\lim_{k\rightarrow \infty} \|\hat{\eta}_{x_k}\| = 0$, there exists $\hat{p} > 0$ such that $\|\hat{\eta}_{x_k}\| < 1$ for all $k > \hat{p}$. Therefore, for all $p > \hat{p}$, it holds that
\[
\|\hat{\eta}_{x_{p-1}}\|^{\frac{1}{1 - \theta}} \leq \|\hat{\eta}_{x_{p-1}}\|^{ \min\left(2, \frac{1}{1 - \theta} \right) } \hbox{ and } \|\hat{\eta}_{x_{p-1}}\|^2 \leq \|\hat{\eta}_{x_{p-1}}\|^{ \min\left(2, \frac{1}{1 - \theta} \right) }
\]
which combining with~\eqref{e104} yields
\[
\sum_{k = p}^{\infty} \|\hat{\eta}_{x_k}\| \leq \|\hat{\eta}_{x_{p-1}}\| + \frac{\tilde{b}_0 C}{\theta} \left( \left( C L_c (1 + \delta)  \right)^{\frac{1}{1-\theta}} + \beta / 2  \right)^{\theta} \|\hat{\eta}_{x_{p-1}}\|^{ \min\left(2 \theta, \frac{\theta}{1 - \theta} \right) }.
\]
Note that if \kwrev{$\theta \geq 0.5$, then $1\leq 2 \theta \leq \theta / (1 - \theta)$. Thus $ \min\left(2 \theta, \frac{\theta}{1 - \theta} \right) \geq 1$. If $\theta \in (0, 0.5)$, then $\min\left(2 \theta, \frac{\theta}{1 - \theta} \right)= \theta / (1 - \theta) <1$.} The remaining part of the proof follow the same derivation as those  in~\cite[Appendix~B]{HuaWei2019extended} and~\cite[Theorem~2]{AB2009}.
\end{proof}

\section{Practical Conditions for Solving Riemannian Proximal Mapping} \label{sect:subprob}
\mbox{}
\kwrev{In the general framework of the inexact RPG method (i.e., Algorithm~\ref{a1}), the required accuracy for solving the Riemannian proximal mapping involves  the unknown exact solution~$\eta_{x_k}^*$}. In this section, we study two practical conditions that can generate  search directions satisfying~\eqref{e01} \kwrev{for different forms of $g$} when the manifold $\mathcal{M}$ has a linear ambient space, or equivalently, $\mathcal{M}$ is an embedded submanifold of $\mathbb{R}^n$ or a quotient manifold whose total space is an embedded submanifold of $\mathbb{R}^n$. Throughout this section, the Riemannian metric is fixed to be the Euclidean metric $\inner[\F]{}{}$.
We describe the algorithms for embedded submanifolds and point out here that in the case of a quotient manifold, the derivations still hold by  replacing the tangent space $\T_x \mathcal{M}$ with the notion of horizontal space $\mathcal{H}_x$.

\subsection{\kwrev{Practical Condition that Ensures Global Convergence} } \label{sect:subprobGlobal}
\kwrev{We first show that an approximate solution to the Riemannian proximal mapping in~\cite{CMSZ2019}
satisfies the condition that is needed to establish the global convergence of IRPG.} Recall that the Riemannian proximal mapping therein is
\begin{equation} \label{e34}
\tilde{\eta}_x = \argmin_{\eta \in \T_x \mathcal{M}} \tilde{\ell}_x(\eta) = \langle \grad f(x),\eta \rangle_{\F}+\frac{\tilde{L}}{2}\langle \eta, \eta\rangle_{\F} + g(x+\eta).
\end{equation}
Since $\mathcal{M}$ has a linear ambient space $\mathbb{R}^n$, its tangent space can be characterized by
\begin{equation} \label{e35}
\T_x \mathcal{M} = \{\eta \in \mathbb{R}^n : B_x^T \eta = 0 \},
\end{equation}
where $B_x^T: \mathbb{R}^n \rightarrow \mathbb{R}^{n-d}: v \mapsto ( \inner[\F]{b_1}{v}, \inner[\F]{b_2}{v}, \ldots, \inner[\F]{b_{n-d}}{v} )^T$ is a linear operator, $d$ is the dimension of the manifold $\mathcal{M}$, and $\{b_1, b_2, \ldots, b_{n-d}\}$ forms an orthonormal basis of the normal space of $\T_x \mathcal{M}$.
{Concrete expressions of $B_x^T$  for various manifolds will be given later in Section~\ref{sec:Impl}.}
Based on $B_x^T$, Problem~\eqref{e34} can be written as
\begin{equation} \label{e09}
\tilde{\eta}_x = \argmin_{B_x^T \eta = 0} \tilde{\ell}_x(\eta) = \langle \grad f(x),\eta \rangle_{\F}+\frac{\tilde{L}}{2}\langle \eta, \eta\rangle_{\F} + g(x+\eta).
\end{equation}
Semi-smooth Newton method can be used to solve~\eqref{e09}. Specifically, the KKT condition of~\eqref{e09} is given by
\begin{align}
&\partial_{\eta} \mathcal{L}(\eta, \Lambda) = 0, \label{e36} \\
&B_x^T \eta = 0, \label{e37}
\end{align}
where $\mathcal{L}(\eta, \Lambda)$ is the Lagrangian function defined by
\begin{equation*}
\mathcal{L}(\eta,\Lambda)= \inner[\F]{\grad f(x)}{\eta} + \frac{\tilde{L}}{2} \inner[\F]{\eta}{\eta} + g(X+\eta) - \inner[\F]{\Lambda}{B_x^T \eta}.
\end{equation*}
Equation~\eqref{e36} yields
\begin{equation} \label{e38}
\eta=\kwrev{v(\Lambda)}:=\Prox_{g / \tilde{L}}\left(x - \frac{1}{\tilde{L}} (\grad f(x) - B_x \Lambda ) \right) - x,
\end{equation}
where 
\begin{equation}\label{e39}
\Prox_{g / \tilde{L}}(z) = \argmin_{v\in\mathbb{R}^{n}} \frac{1}{2}\|v-z\|^2+ \frac{1}{\tilde{L}} g(v)
\end{equation}
denotes the Euclidean proximal mapping. Substituting~\eqref{e38} into~\eqref{e37} yields that
\begin{equation} \label{e40}
\Psi(\Lambda):=B_x^T \left( \Prox_{g / \tilde{L}} \left(x - \frac{1}{\tilde{L}} (\grad f(x) - B_x \Lambda ) \right) - x \right)=0,
\end{equation}
which is a system of nonlinear equations with respect to $\Lambda$. Therefore, to solve~\eqref{e11}, one can first find the root of~\eqref{e40} and substitute it back to~\eqref{e38} to obtain $\tilde{\eta}_x$.
Moreover, the semi-smooth Newton method can be used to solve \eqref{e40}, which updates the iterate $\Lambda_k$ by
$\Lambda_{k+1} = \Lambda_k + d_k$,
where $d_k$ is a solution of
\[
J_{\Psi}(\Lambda_k) [d_k] = - \Psi(\Lambda_k),
\]
and $J_{\Psi}(\Lambda_k)$ is a generalized Jacobian of $\Psi$.

\whcomm{[I deleted the old Algorithm 4.1 and slightly modified the paragraph below.]}{}
\kwrev{To solve the proximal mapping \eqref{e34} approximately, we consider  an algorithm that can solve \eqref{e40} globally, e.g., the regularized semi-smooth Newton algorithm from~\cite{QS2006,ZST2010,XLWZ2016}. 
Given an approximate solution $\hat{\Lambda}$ to \eqref{e40}, define\footnote{Note that if $\Psi(\Lambda) \neq 0$, then $\eta$ defined by~\eqref{e38} may be not in $\T_x \mathcal{M}$. Therefore, we add an orthogonal projection.} 
\begin{equation} \label{e59}
\hat\eta_x = P_{\T_{x} \mathcal{M}} (v(\hat{\Lambda})),
\end{equation}
where $v(\cdot)$ is defined in \eqref{e38}.
We will show later, in order for $\hat{\eta}_x$ to satisfy~\eqref{e01}, it suffices to require   $\hat{\Lambda}$  to satisfy
\begin{align}
\|\Psi(\hat{\Lambda})\| &\leq \min( \phi(\| \hat{\eta}_x \|), 0.5),  \label{e58} \\
 \ell_x(0) &\geq \ell_x(\hat\eta_x), \label{e60}
\end{align}
where $\ell_x(\cdot)$ the Riemannian proximal mapping function used in Algorithm~\ref{a1}, $\phi:\mathbb{R} \rightarrow \mathbb{R}$ satisfies $\phi(0)=0$ and nondecreasing. Moreover, a globally convergent algorithm will terminate properly under these two stopping conditions.
} 
The analyses rely on Assumption~\ref{as02}.

\begin{assumption} \label{as02}
The function $g$ is convex and Lipschitz continuous with constant $L_g$, where the convexity and Lipschitz continuity are in the Euclidean sense.
\end{assumption}
Note that if $g$ is given by the one-norm regularization, then Assumption~\ref{as02} holds.

\whcomm{[Modified below $q$ to $\tilde{q}$.]}{}
\kwrev{It is evident that, in order to show that $\hat{\eta}_x$ satisfies \eqref{e01}, we only need to show that  there is a function $\tilde{q}(t)$ such that $\|\hat{\eta}_{x} - {\eta}_{x}^*\| \leq \tilde{q}(\|\hat{\eta}_{x}\|)$ holds if $\hat{\eta}_x$ satisfies \eqref{e58} and~\eqref{e60}. Therefore, the function $q(s, t)$ in~\eqref{e01} can be defined by $q(s, t) = \tilde{q}(t)$.}
\begin{theorem}\label{th02}
Suppose there exists a constant $\rho > 0$ such that for any $x \in \Omega_{x_0}$ it holds that $\Omega_{x_0} \subseteq R_x(\mathcal{B}(0_x, \rho))$.
If $\tilde{L}$ is sufficiently large and  the search direction $\hat{\eta}_x$ define in~\eqref{e59} satisfies \eqref{e58}, then we have
\begin{align}
&\|\hat{\eta}_{x} - {\eta}_{x}^*\| \leq \tilde{q}(\|\hat{\eta}_{x}\|), \label{e43} 
\end{align}
where 
\[
\tilde{q}(t) = \frac{2 L_g \varkappa_2 }{ \tilde{L} - 2 L_g \varkappa_2 } t + \sqrt{ \frac{ 4 L_g \varkappa_2 - 4 L_g^2 \varkappa_2^2 }{ (\tilde{L} - 2 L_g \varkappa)^2 } t^2 + \frac{4 \vartheta}{\tilde{L} - 2 L_g \varkappa_2} \min(\phi(t), 0.5) }
\]
and $\phi(t)$ is defined in~\eqref{e58}.
\end{theorem}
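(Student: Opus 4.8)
The plan is to bound $\|\hat\eta_x-\eta_x^*\|$ by inserting the exact solution $\tilde\eta_x$ of the Euclidean‑type proximal mapping~\eqref{e34} as an intermediate point and then exploiting the strong convexity that a large $\tilde L$ induces. First I would record the two structural facts on which everything rests. (i) $\tilde\ell_x$ is $\tilde L$‑strongly convex on $\T_x\mathcal M$, since $\eta\mapsto g(x+\eta)$ is convex by Assumption~\ref{as02} and $\frac{\tilde L}{2}\|\cdot\|^2$ adds modulus $\tilde L$; hence $\tilde\eta_x$ is its unique minimizer. (ii) For $\tilde L$ large enough $\ell_x$ is likewise strongly convex on $\T_x\mathcal M$, with modulus $\tilde L-2L_g\varkappa_2$, so the stationary point $\eta_x^*$ is in fact its unique minimizer and $\ell_x(\eta)-\ell_x(\eta_x^*)\ge\frac{\tilde L-2L_g\varkappa_2}{2}\|\eta-\eta_x^*\|^2$ for all $\eta\in\T_x\mathcal M$. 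For (ii) one writes $\ell_x(\eta)=\tilde\ell_x(\eta)+\bigl(g(R_x(\eta))-g(x+\eta)\bigr)$; by~\eqref{e11} and Assumption~\ref{as02} the perturbation has size at most $L_g\varkappa_2\|\eta\|^2$, and its failure to be convex is controlled through the $C^\infty$‑smoothness of $R$ on the compact set $\Omega_{x_0}$ (the hypothesis $\Omega_{x_0}\subseteq R_x(\mathcal B(0_x,\rho))$ keeps all relevant points in a region where~\eqref{e10} and~\eqref{e11} hold with uniform constants $\varkappa_1,\varkappa_2$), so it is dominated once $\tilde L$ exceeds a fixed multiple of $L_g\varkappa_2$.

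Second, I would establish the identity linking $\hat\eta_x$ to the residual of the semismooth Newton system. Since $\hat\eta_x=P_{\T_x\mathcal M}(v(\hat\Lambda))$, $\T_x\mathcal M=\{\eta:B_x^T\eta=0\}$ and $B_x$ has orthonormal columns, we have $v(\hat\Lambda)-\hat\eta_x=B_xB_x^Tv(\hat\Lambda)$, so that $\|v(\hat\Lambda)-\hat\eta_x\|=\|B_x^Tv(\hat\Lambda)\|=\|\Psi(\hat\Lambda)\|$. By~\eqref{e38} the point $v(\hat\Lambda)$ is the exact minimizer over $\mathbb R^n$ of $\Phi(\eta):=\tilde\ell_x(\eta)-\langle B_x\hat\Lambda,\eta\rangle_{\F}$, which coincides with $\tilde\ell_x$ on $\T_x\mathcal M$; hence $\Phi(v(\hat\Lambda))\le\Phi(\tilde\eta_x)=\tilde\ell_x(\tilde\eta_x)$, and $\Phi(\hat\eta_x)=\tilde\ell_x(\hat\eta_x)$. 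Using $0\in\partial\Phi(v(\hat\Lambda))$, the $L_g$‑Lipschitz continuity of $g$, and $\|\Psi(\hat\Lambda)\|\le 0.5$, this gives
\[
\tilde\ell_x(\hat\eta_x)-\tilde\ell_x(\tilde\eta_x)=\Phi(\hat\eta_x)-\tilde\ell_x(\tilde\eta_x)\le\Phi(\hat\eta_x)-\Phi(v(\hat\Lambda))\le\vartheta\,\|\Psi(\hat\Lambda)\|,
\]
which is where the constant $\vartheta$ (of the order of $L_g+\tilde L$) is produced.

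Third, I would combine the two comparisons. From~\eqref{e11} and Assumption~\ref{as02}, $\ell_x(\hat\eta_x)\le\tilde\ell_x(\hat\eta_x)+L_g\varkappa_2\|\hat\eta_x\|^2$ and, since $\tilde\eta_x$ minimizes $\tilde\ell_x$ over $\T_x\mathcal M$ and $\eta_x^*\in\T_x\mathcal M$, $\ell_x(\eta_x^*)\ge\tilde\ell_x(\eta_x^*)-L_g\varkappa_2\|\eta_x^*\|^2\ge\tilde\ell_x(\tilde\eta_x)-L_g\varkappa_2\|\eta_x^*\|^2$. Plugging these together with fact (ii) and the previous display yields
\[
\frac{\tilde L-2L_g\varkappa_2}{2}\,\|\hat\eta_x-\eta_x^*\|^2\le\vartheta\,\|\Psi(\hat\Lambda)\|+L_g\varkappa_2\|\hat\eta_x\|^2+L_g\varkappa_2\|\eta_x^*\|^2 .
\]
Writing $s=\|\hat\eta_x-\eta_x^*\|$, $t=\|\hat\eta_x\|$, using $\|\eta_x^*\|\le t+s$ and $\|\Psi(\hat\Lambda)\|\le\min(\phi(t),0.5)$, this becomes a quadratic inequality $as^2-bs-c\le 0$ with nonnegative coefficients, $a$ of order $\tilde L-2L_g\varkappa_2$, $b$ of order $L_g\varkappa_2\,t$, and $c$ of order $L_g\varkappa_2\,t^2+\vartheta\min(\phi(t),0.5)$; solving via $s\le\frac{b+\sqrt{b^2+4ac}}{2a}$ and collecting constants yields exactly the stated $\tilde q(t)$.

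The step I expect to be the main obstacle is fact (ii): upgrading the mere size bound $|g(R_x(\eta))-g(x+\eta)|\le L_g\varkappa_2\|\eta\|^2$ to genuine strong convexity of $\ell_x$ on $\T_x\mathcal M$ (so that $\eta_x^*$ may be treated as the minimizer), which needs a second‑order / semiconvexity estimate for $g\circ R_x$ extracted from the smoothness of the retraction, and, correspondingly, a quantitative choice of how large ``$\tilde L$ sufficiently large'' must be. A secondary, purely bookkeeping, difficulty is tracking all the constants through the final quadratic so that its root matches the closed form of $\tilde q$ (in particular fixing $\vartheta$ and handling the $\|\eta_x^*\|^2=(t+s)^2$ term).
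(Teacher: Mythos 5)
Your argument hinges entirely on your fact (ii) --- that for $\tilde L$ large enough $\ell_x(\eta)=\tilde\ell_x(\eta)+\bigl(g(R_x(\eta))-g(x+\eta)\bigr)$ is strongly convex on $\T_x\mathcal M$, so that the stationary point $\eta_x^*$ is its unique minimizer and a quadratic-growth inequality holds --- and this is exactly the step you leave unproven. It does not follow from the magnitude bound $|g(R_x(\eta))-g(x+\eta)|\le L_g\varkappa_2\|\eta\|^2$ coming from \eqref{e11}: a perturbation that is merely small in size can destroy convexity. What you would need is a chord estimate of the form $\|R_x(\lambda\eta+(1-\lambda)\xi)-\lambda R_x(\eta)-(1-\lambda)R_x(\xi)\|\le \tfrac{M}{2}\lambda(1-\lambda)\|\eta-\xi\|^2$ on the compact region (true by smoothness of $R$), which together with convexity and $L_g$-Lipschitz continuity of $g$ shows $g\circ R_x+\tfrac{L_gM}{2}\|\cdot\|^2$ is convex; only then does (ii) hold, with modulus $\tilde L-L_gM$ rather than $\tilde L-2L_g\varkappa_2$. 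Two secondary issues: your bound $\Phi(\hat\eta_x)-\Phi(v(\hat\Lambda))\le\vartheta\|\Psi(\hat\Lambda)\|$ contains the term $\langle\hat\Lambda,\Psi(\hat\Lambda)\rangle$, so you must also bound $\|\hat\Lambda\|$ (this can be done from the stationarity relation $B_x\hat\Lambda\in\grad f(x)+\tilde L\,v(\hat\Lambda)+\partial^E g(x+v(\hat\Lambda))$, but you pass over it); and with your final quadratic ($\|\eta_x^*\|\le t+s$ feeding an extra $L_g\varkappa_2(t+s)^2$ into the right-hand side) the constants come out with denominators like $\tilde L-4L_g\varkappa_2$, so you obtain a bound of the same shape but not the stated $\tilde q$.

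The paper's proof deliberately avoids any convexity of $\ell_x$, which is one of its selling points. Setting $\epsilon=\Psi(\hat\Lambda)$, it defines the shifted function $\hat\ell_x(\eta)=\tilde\ell_x(\eta+B_x\epsilon)$ and observes that $\hat\eta_x=P_{\T_x\mathcal M}v(\hat\Lambda)$ is the \emph{exact} minimizer of $\hat\ell_x$ over $\T_x\mathcal M$, because the shift maps $\T_x\mathcal M$ onto the affine set $\{B_x^T\eta=\epsilon\}$ on which $v(\hat\Lambda)$ is optimal; strong convexity of $\hat\ell_x$ is then free, since no retraction appears in it. Combining the $\tilde L$-strong-convexity inequality for $\hat\ell_x$ with the two perturbation bounds $|\ell_x-\tilde\ell_x|\le L_g\varkappa_2\|\eta\|^2$ and $|\hat\ell_x-\tilde\ell_x|\le\vartheta_2\|B_x\epsilon\|$ sandwiches $\ell_x$ and shows that every $\eta$ outside an explicit ball of radius $\tilde q(\|\hat\eta_x\|)$ about $\hat\eta_x$ has $\ell_x(\eta)>\ell_x(\hat\eta_x)$; hence a global minimizer of $\ell_x$, taken as $\eta_x^*$, lies in that ball, and \eqref{e43} follows with the stated constants. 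So either adopt this level-set/sandwich argument, or complete your route by adding the semiconvexity lemma for $g\circ R_x$ and the bound on $\|\hat\Lambda\|$, accepting a $\tilde q$ with modified coefficients.
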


\begin{proof}
For ease of notation, let $\epsilon=\Psi(\hat{\Lambda}) = B_x^T v (\hat{\Lambda})$. 
Consider the optimization problem
\begin{align} \label{e24}
\min_{B_x^T \eta = \epsilon} \tilde{\ell}_x(\eta).
\end{align}
Its KKT condition is given by
\begin{equation*}
\partial_{\eta} \mathcal{L}(\eta, \Lambda) = 0, \qquad B_x^T \eta = \epsilon,
\end{equation*}
which is satisfied by \kwrev{$(\hat{\Lambda},v(\hat{\Lambda}))$} 
Therefore, $v(\hat{\Lambda})$ is the minimizer of $\tilde{\ell}_x(\eta)$ over the set $\mathcal{S} = \{v : B_x^T v = \epsilon \}$, i.e.,
\begin{equation} \label{e25}
v(\hat{\Lambda}) = \argmin_{v \in \mathcal{S}} \tilde{\ell}_x(\eta) = \langle \grad f(x), \eta \rangle_{\F}+\frac{\tilde{L}}{2}\langle \eta, \eta\rangle_{\F} + g(x+\eta).
\end{equation}
Define $\hat{\ell}_x(\eta_x) = \tilde{\ell}_x(\eta + B_x \epsilon)$. Further by the definition of $\hat{\eta}_x$, i.e., $\hat{\eta}_x = P_{\T_x \mathcal{M}} v(\hat{\Lambda})$, it is not hard to see that
\[
\hat{\eta}_x = \argmin_{\eta \in \T_x \mathcal{M}} \hat{\ell}_x(\eta).
\]

By the $\tilde{L}$-strongly convexity of $\hat{\ell}_x$ and the definition of $\hat{\eta}_x$, it holds that
\begin{equation} \label{e20}
\hat{\ell}_x(\eta_x) \geq \hat{\ell}_x( \hat{\eta}_x ) + \frac{\tilde{L}}{2} \|\eta_x - \hat{\eta}_x\|^2,\quad \forall \eta_x \in \T_x \mathcal{M}.
\end{equation}
By definition of $\hat{\eta}_x$, we have 
\begin{equation} \label{e32}
0 \in \whcomm{}{\grad f(x) + \tilde{L} \hat{\eta}_x + P_{\T_x \mathcal{M}} \partial^E g(x+\hat{\eta}_x + B_x \epsilon).}
\end{equation}
Since $\Omega_{x_0}$ is compact, there exists a constant $U_f$ such that $\|\grad f(x)\| < U_f$ for all $x \in \Omega_{x_0}$. By~\cite{CLA90}, if a function is Lipschitz continuous, then the norm of any subgradient is smaller than its Lipschitz constant. Therefore, by Assumption~\ref{as02}, it holds that $\|\zeta\| \leq L_g$ for any $\zeta \in \partial^E g(x + \eta)$. It follows from~\eqref{e32} and~\eqref{e58} 
that
\begin{equation} \label{e41}
\kwrev{\|\hat{\eta}_x\|} \leq \frac{U_f + L_g}{\tilde{L}}.
\end{equation}
\kwrev{Define $\mathcal{U} = \{R_x(\eta_x) : x \in \Omega_{x_0},  \|\eta_x\| \leq \rho \}$. Therefore, $\mathcal{U}$ is compact. Moreover, since $\Omega_{x_0} \subseteq R_x(\mathcal{B}(0_x, \rho))$  for any $x\in\Omega_{x_0}$, we have $\Omega_{x_0}\subset\mathcal{U}$.} It follows from~\eqref{e11} that there exists $\varkappa_2$ such that
\begin{equation} \label{e33}
\|R_x(\eta_x) - x - \eta_x\| \leq \varkappa_2 \|\eta_x\|^2
\end{equation}
holds for any $x \in \Omega_{x_0}$ and $\|\eta_x\| \leq \rho$. By Assumption~\ref{as02} and~\eqref{e33}, we have
\begin{equation} \label{e12}
|\ell_x(\eta_x) - \tilde{\ell}_x(\eta_x)| \leq L_g \varkappa_2 \|\eta_x\|^2, \quad \forall x \in \Omega_{x_0}, \quad \eta_x \in \T_x \mathcal{M}, \quad \|\eta_x\| \leq \rho.
\end{equation}
Moreover, by the definition of $\hat{\ell}_x(\eta_x)$, we have for any \kwrev{$x\in\Omega_{x_0}$ and $\|\eta_x\|\leq \rho$},
\begin{align*}
    &|\hat{\ell}_x(\eta_x)-\tilde{\ell}_x(\eta_x)|\\
    &\leq \|\grad f(x)\|\|B_x\epsilon\|+\tilde{L}\|\eta_x\|\|B_x\epsilon\|+\frac{\tilde{L}}{2}\|B_x\epsilon\|^2+|g(x+\eta_x+B_x\epsilon)-g(x+\eta_x)|\\
    &=(\|\grad f(x)\|+\tilde{L}\|\eta_x\|+\frac{\tilde{L}}{2}\|B_x\epsilon\|+L_g)\|B_x\epsilon\|\\
    &\leq (U_f+\kwrev{(\rho+1)}\tilde{L}+L_g)\|B_x\epsilon\|\\
    &=:\vartheta_2\|B_x\epsilon\|,
\end{align*}
where the third line has used the fact $\|B_x\epsilon\|\leq \|\epsilon\|\leq 1/2$ (see \eqref{e58}).
Together with ~\eqref{e12}, and~\eqref{e20}, it holds that for any $x \in \Omega_{x_0}, \eta_x \in \T_x \mathcal{M}$, and $\|\eta_x\| \leq \rho$,
\begin{align}
&\hat{\ell}_x( \hat{\eta}_x ) + \frac{\tilde{L}}{2} \|\eta_x - \hat{\eta}_x\|^2 - L_g \varkappa_2 \|\eta_x\|^2 - \vartheta_2 \|B_x \epsilon\| \nonumber \\
\leq& \hat{\ell}_x(\eta_x) - L_g \varkappa_2 \|\eta_x\|^2 - \vartheta_2 \|B_x \epsilon\| \nonumber \\
\leq& \tilde{\ell}_x(\eta_x) - L_g \varkappa_2 \|\eta_x\|^2\\
\leq& \ell_x(\eta_x) \\
\leq& \tilde{\ell}_x(\eta_x) + L_g \varkappa_2 \|\eta_x\|^2 \nonumber \\
\leq& \hat{\ell}_x(\eta_x) + L_g \varkappa_2 \|\eta_x\|^2 + \vartheta_2 \|B_x \epsilon\|. \label{e42}
\end{align}

Define $$\hat{\Omega} = \{ \eta_x \in \T_x \mathcal{M} : \frac{\tilde{L}}{2} \|\eta_x - \hat{\eta}_x\|^2 - L_g \varkappa_2 \|\eta_x\|^2 - \vartheta_2 \|B_x \epsilon\| \leq L_g \varkappa_2 \|\hat{\eta}_x\|^2 + \vartheta_2 \|B_x \epsilon\|
 \}.$$ It is easy to verify that
\begin{align*}
 \hat{\Omega} = &\left\{ \eta_x \in \T_x \mathcal{M} : \left\|\eta_{x} - \frac{\tilde{L}}{\tilde{L} - 2 L_g \varkappa_2} \hat{\eta}_x\right\|{\color{white}\sqrt{\frac{ 4 \tilde{L} L_g \varkappa_2 - 4 L_g^2 \varkappa_2^2}{ (\tilde{L} - 2 L_g \varkappa)^2 }}} \right. \\
 & \qquad \qquad \leq \left. \sqrt{ \frac{ 4 \tilde{L} L_g \varkappa_2 - 4 L_g^2 \varkappa_2^2}{ (\tilde{L} - 2 L_g \varkappa_2)^2 } \|\hat{\eta}_x\|^2 + \frac{4 \vartheta_2}{\tilde{L} - 2 L_g \varkappa_2} \|B_x \epsilon\|} \right\},
\end{align*}
which yields
\begin{align*}
\hat{\Omega} \subseteq \mathcal{U} :=& \left\{ \eta_x \in \T_x \mathcal{M} : \| \eta_{x} - \hat{\eta}_x \| \leq{\color{white}\sqrt{\frac{ 4 \tilde{L} L_g \varkappa_2 - 4 L_g^2 \varkappa_2^2}{ (\tilde{L} - 2 L_g \varkappa)^2 }}} \right. \\
&\left. \frac{2 L_g \varkappa_2 }{ \tilde{L} - 2 L_g \varkappa_2 } \|\hat{\eta}_x\| + \sqrt{ \frac{ 4 \tilde{L} L_g \varkappa_2 - 4 L_g^2 \varkappa_2^2}{ (\tilde{L} - 2 L_g \varkappa_2)^2 } \|\hat{\eta}_x\|^2 + \frac{4 \vartheta_2}{\tilde{L} - 2 L_g \varkappa_2} \|B_x \epsilon\|} \right\}.
\end{align*}
\kwrev{Noting the expression of $\vartheta_2$, when $\tilde{L}\rightarrow\infty$, the righthand side in the above inequality goes to $\sqrt{4(\rho+1)}$. Thus, for sufficiently large $\tilde{L}$ and $\rho$, we have 
\begin{align*}
    \hat{\Omega}&\subset\mathcal{U} := \{\eta_x \in \T_x \mathcal{M} : \| \eta_{x} \| \leq \rho/2 \}\\
    &\subset\mathcal{W} := \{\eta_x \in \T_x \mathcal{M} : \| \eta_{x} \| \leq \rho \}.
\end{align*}}

For any $\eta_x \in \mathcal{W}$ but not in $\hat{\Omega}$, it follows from~\eqref{e42} that
\begin{align}
\ell_x(\eta_x) \geq& \hat{\ell}_x( \hat{\eta}_x ) + \frac{\tilde{L}}{2} \|\eta_x - \hat{\eta}_x\|^2 - L_g \varkappa_2 \|\eta_x\|^2 - \vartheta \|B_x \epsilon\| \nonumber \\
>& \hat{\ell}_x( \hat{\eta}_x ) + L_g \varkappa_2 \|\hat{\eta}_x\|^2 + \vartheta \|B_x \epsilon\| \geq \ell_x(\hat{\eta}_x).  \label{e45}
\end{align}
Therefore, there exists a global minimizer of $\ell_x$ in the set $\hat{\Omega}$, we denote it by $\eta_x^*$. \kwrev{It follows from $\eta_x^* \in \hat{\Omega}$, and thus ${\|\hat{\eta}_{x} - {\eta}_{x}^*\| \leq \tilde{q}(\|\hat{\eta}_{x}\|)}$ for $\tilde{q}(t)$ given in the theorem.}
\end{proof}

Theorem~\ref{th02} ensures that the search direction  given by~\eqref{e59} 
is desirable for IRPG to have global convergence. There are several implications of this theorem. First, the global convergence of ManPG in~\cite{CMSZ2019} follows and the step size one is always acceptable. \whcomm{}{This can be seen by noting that if $\phi(t) \equiv 0$, then the direction
$\hat\eta$ 
with $\Lambda$ satisfying~\eqref{e58} is the search direction used in~\cite{CMSZ2019}.}
Secondly, one can relax the accuracy of the solution in ManPG and still guarantees its global convergence. 
However, it should be pointed out that Theorem~\ref{th02} does not implies that $\hat{\eta}_x$ satisfies~\eqref{e28} or~\eqref{e29}. Therefore, the uniqueness of accumulation points and the convergence rate based on the Riemannian KL property are not guaranteed. 

{Lemma~\ref{le04} shows that \kwrev{a globally convergent algorithm for solving \eqref{e40}} can terminate properly in the sense that it satisfies~\eqref{e58} and~\eqref{e60} under the assumption that $\tilde{L}$ is sufficiently large.}
\begin{lemma} \label{le04}
Suppose there exists a constant $\rho > 0$ such that for any $x \in \Omega_{x_0}$ it holds that $\Omega_{x_0} \subseteq R_x(\mathcal{B}(0_x, \rho))$.
If $\tilde{L}$ is sufficiently large and \kwrev{an algorithm that converges globally is used for \eqref{e40}}, then there exists an iterate from the algorithm such that $\hat{\eta}_x$ satisfies~\eqref{e58} and~\eqref{e60}.
\end{lemma}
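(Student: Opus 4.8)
The plan is to run the globally convergent solver on the nonlinear system \eqref{e40}, so that its iterates $\{\Lambda_j\}$ satisfy $\|\Psi(\Lambda_j)\|\to 0$, and then to show that for every sufficiently large $j$ the vector $\hat{\eta}_x^{(j)}:=P_{\T_x\mathcal{M}}(v(\Lambda_j))$ defined by \eqref{e59} meets both stopping conditions \eqref{e58} and \eqref{e60}; the existence of the required iterate then follows at once.

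The first step is to understand the limiting behaviour of $\hat{\eta}_x^{(j)}$. Since $g$ is convex (Assumption~\ref{as02}), $\Prox_{g/\tilde{L}}$ is nonexpansive and $\|\Prox_{g/\tilde{L}}(w)-w\|\le L_g/\tilde{L}$; together with $B_x^T\grad f(x)=0$ and $B_x^TB_x=I$ this yields $\|\Psi(\Lambda)\|\ge(\|\Lambda\|-L_g)/\tilde{L}$, so $\|\Psi(\Lambda_j)\|\to 0$ forces $\{\Lambda_j\}$ to be bounded. Every accumulation point $\bar{\Lambda}$ of $\{\Lambda_j\}$ then satisfies $\Psi(\bar{\Lambda})=0$, and projecting the proximal optimality identity for $v(\bar{\Lambda})$ onto $\T_x\mathcal{M}$ shows that $v(\bar{\Lambda})$ is a stationary point of the $\tilde{L}$-strongly convex restriction $\tilde{\ell}_x|_{\T_x\mathcal{M}}$, hence equals its unique minimizer $\tilde{\eta}_x$ (the exact solution of \eqref{e34}). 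Since $v(\cdot)$ is continuous and $\{\Lambda_j\}$ is bounded, this gives $v(\Lambda_j)\to\tilde{\eta}_x\in\T_x\mathcal{M}$, and therefore $\hat{\eta}_x^{(j)}=P_{\T_x\mathcal{M}}v(\Lambda_j)\to\tilde{\eta}_x$.

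The second step is to check the two conditions. We may assume $\tilde{\eta}_x\ne 0$, since $\tilde{\eta}_x=0$ means exactly that $x$ is a stationary point of $F$, in which case the inner solve is not invoked. For \eqref{e60}: $\tilde{L}$-strong convexity gives $\tilde{\ell}_x(\tilde{\eta}_x)\le\tilde{\ell}_x(0)-\tfrac{\tilde{L}}{2}\|\tilde{\eta}_x\|^2$, and combining this with the smoothing estimate \eqref{e12} (applicable since $\|\tilde{\eta}_x\|$ is small when $\tilde{L}$ is large, as in \eqref{e41}) yields $\ell_x(\tilde{\eta}_x)\le\ell_x(0)-(\tfrac{\tilde{L}}{2}-L_g\varkappa_2)\|\tilde{\eta}_x\|^2<\ell_x(0)$ whenever $\tilde{L}>2L_g\varkappa_2$; continuity of $\ell_x$ and $\hat{\eta}_x^{(j)}\to\tilde{\eta}_x$ then give $\ell_x(\hat{\eta}_x^{(j)})<\ell_x(0)$ for all large $j$. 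For \eqref{e58}: $\|\Psi(\Lambda_j)\|\to 0$ is eventually below $1/2$, and since $\|\hat{\eta}_x^{(j)}\|\to\|\tilde{\eta}_x\|>0$ while $\phi$ is nondecreasing and positive on $(0,\infty)$, the quantity $\phi(\|\hat{\eta}_x^{(j)}\|)$ stays above a fixed positive number for large $j$; hence $\|\Psi(\Lambda_j)\|\le\min(\phi(\|\hat{\eta}_x^{(j)}\|),1/2)$ eventually. Any such $j$ furnishes the claimed iterate.

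I expect the first step to be the main obstacle: a generic ``globally convergent'' solver guarantees only $\Psi(\Lambda_j)\to 0$, not convergence of the multiplier sequence $\{\Lambda_j\}$, so one has to supply both the coercivity bound that rules out $\|\Lambda_j\|\to\infty$ and the strong-convexity argument showing that the possibly non-unique multiplier does not affect the limit of $v(\Lambda_j)$. A secondary point is the positivity of $\phi$ on $(0,\infty)$ used above, together with the companion degenerate case $\tilde{\eta}_x=0$, in which the smoothing estimate forces $0$ to be the unique minimizer of $\ell_x$ on $\T_x\mathcal{M}$ so that the stopping rules are consistent only with $\hat{\eta}_x=0$; this case does not arise while IRPG is running because the outer loop terminates at such a stationary point.
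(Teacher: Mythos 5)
Your proposal is correct and follows essentially the same route as the paper: show that the exact solution gives a strict decrease $\ell_x(0) > \ell_x(\tilde{\eta}_x)$ (the paper gets this from \eqref{e45} with $\epsilon = 0$, you get it directly from $\tilde{L}$-strong convexity of $\tilde{\ell}_x$ plus \eqref{e12}, which are the same ingredients), and then use $\hat{\eta}_x \rightarrow \tilde{\eta}_x$ and $\|\Psi(\Lambda)\| \rightarrow 0$ to conclude both \eqref{e58} and \eqref{e60} hold at some finite iterate. The extra material you supply — the coercivity bound forcing boundedness of $\{\Lambda_j\}$, the identification of $v(\bar{\Lambda})$ with $\tilde{\eta}_x$, and the explicit flags that $\tilde{\eta}_x \neq 0$ and $\phi > 0$ on $(0,\infty)$ are needed — fills in details the paper's one-line appeal to ``strong convexity and convergence of the algorithm'' leaves implicit, but it is not a different argument.
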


\begin{proof}

If $\epsilon = 0$, then $\hat{\eta}_x = \tilde{\eta}_x$ and the above derivations for $\hat{\eta}_x$ also hold for $\tilde{\eta}_x$. Therefore, $\ell_x(0_x) > \ell_x(\tilde{\eta}_x)$ follows from~\eqref{e45} by noting $0_x \in \mathcal{W}$ and $0_x \notin \hat{\Omega}$ \kwrev{when $\tilde{L}$ is sufficiently large.}.
Finally, by strong convexity of $\tilde{\ell}_x$ and the convergence of the  algorithm, we have that $\hat{\eta}_x \rightarrow \tilde{\eta}_x$ and $\|\Psi(\Lambda)\| \rightarrow 0$. 
Therefore, the iterate $\hat{\eta}_x$ satisfies~\eqref{e58} and~\eqref{e60} in certain iteration.
\end{proof}

\subsection{\kwrev{Practical Condition that Ensures Local Convergence Rate}}  \label{sec:LocalRate}

\kwrev{In this section, we  directly consider the solution of the Riemanniann proximal mapping \eqref{e61}  and provide a   practical condition that meets the requirement for the local convergence rate analysis.} First note that the Riemnnaian proximal mapping  \eqref{e61} is equivalent to
\begin{equation} \label{e62}
\min_{c \in \mathbb{R}^d} J_x(c) := \inner[\F]{c}{Q_x^T \grad f(x)} + \frac{\tilde{L}}{2} \|c\|^2 + g(R_{x}(Q_{x} c)),
\end{equation}
which is an optimization problem on a Euclidean space, where the subscript $k$ is omitted for simplicity, $d$ is the dimension of $\mathcal{M}$ and $Q_x$ forms an orthonormal space of $\T_x \mathcal{M}$.

The analysis in this section relies on the notion of error bound (see its definition in e.g.,~\cite[(35)]{TY2009}, \cite{ZS2017}), whose discussion relies on the convexity of the objective function. Therefore, we will make Assumption~\ref{as05} which uses Definition~\ref{def:Rconvexity}. It follows that $J_x(c)$ is convex. Note that Definition~\ref{def:Rconvexity} has also been used in~\cite{HGA2014,HuaWei2019b} 
\begin{definition} \label{def:Rconvexity}
A function $h:\mathcal{M} \rightarrow \mathbb{R}$ is called retraction-convex with respect to a retraction $R$ in  $\mathcal{N} \subseteq \mathcal{M}$ if for any $x \in \mathcal{N}$ and any $\mathcal{S}_x \subseteq \T_x \mathcal{M}$ such that $R_x(\mathcal{S}_x)\subseteq \mathcal{N}$, there exists a tangent vector $\zeta \in \T_x \mathcal{M}$ such that $p_x = h \circ R_x$ satisfies
\begin{equation} \label{RPG:def:conv}
p_x(\eta) \geq p_x(\xi) + \inner[x]{\zeta}{\eta - \xi}\;\; \forall \eta, \xi \in \mathcal{S}_x.
\end{equation}
Note that $\zeta = \grad p_x(\xi)$ if $h$ is differentiable; otherwise, $\zeta$ is any Riemannian subgradient of $p_x$ at $\xi$.
\end{definition}
\begin{assumption} \label{as05}
The function $g$ is retraction-convex on $\mathcal{M}$.
\end{assumption}

\kwrev{In the typical error bound analysis, the residual map plays a key role which controls the distance of a point to the optimal solution set. For our purpose, the residual map  for~\eqref{e62} is defined as follows:}
\begin{equation} \label{e67}
r_x(c) = \argmin_{v \in \mathbb{R}^n} w_{x, c}(v) := \inner[\F]{v}{Q_x^T \grad f(x) + \tilde{L} c} + \frac{\tilde{L}}{2} \|v\|^2 + g(R_x( Q_x(c + v))),
\end{equation}
\kwrev{It is not hard to see that 
\begin{align*}
    r_x(c^*)=0\Leftrightarrow c^*\mbox{ is the optimal solution to \eqref{e62}}.
\end{align*}Note the residual map defined here is slightly different from the one defined in \cite{TY2009}, where the coefficient in front $\|v\|^2$ is $1/2$ instead of $\tilde{L}/2$. However, the error bound can be established in  exactly the same way. To keep the presentation self-contained, details of the proof are provided below. }
It is worth pointing out that the family of Problems~\eqref{e62} parameterized by $x$ possesses an error bound property with the coefficient independent of $x$.
\begin{lemma} \label{le05}
Suppose that Assumption~\ref{as05} holds. Then it holds that
\begin{equation} \label{e83}
\|c - c_x^*\| \leq 2 \|r_{x}(c)\|, \quad \hbox{ for all } x \in \mathcal{M},
\end{equation}
where $c_x^*$ is the minimizer of $J_x(c)$.

\end{lemma}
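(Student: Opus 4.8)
The plan is to reduce the lemma to the simple fact that the smooth part of $J_x$ is an \emph{exact} quadratic, so the residual map $r_x$ recovers the exact minimizer in one step; the only genuinely manifold-flavoured step is turning the retraction-convexity of $g$ into ordinary convexity of the composite functions that appear. First I would fix notation: write $b_x = Q_x^{T}\grad f(x)$ and $h_x(c) = g(R_x(Q_x c))$, so that $J_x(c) = \inner[\F]{c}{b_x} + \frac{\tilde L}{2}\|c\|^2 + h_x(c)$ and $w_{x,c}(v) = \inner[\F]{v}{b_x + \tilde L c} + \frac{\tilde L}{2}\|v\|^2 + h_x(c+v)$. The first step is to show that $h_x$ is convex on $\R^{d}$: applying Definition~\ref{def:Rconvexity} with $\mathcal{N} = \mathcal{M}$ and $\mathcal{S}_x = \T_x\mathcal{M}$ (legitimate since $R$ is assumed globally defined), Assumption~\ref{as05} says that $g\circ R_x$ has a supporting affine minorant at every point of the vector space $\T_x\mathcal{M}$, hence is convex there; since $c\mapsto Q_x c$ is linear and $Q_x^{T}Q_x = \I$, the composition $h_x$ is convex on $\R^{d}$. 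Consequently $J_x$ and $v\mapsto w_{x,c}(v)$ are $\tilde L$-strongly convex and, being continuous and coercive, attain unique minimizers, so $c_x^{*}$ and $r_x(c)$ are well defined for every $x\in\mathcal{M}$.

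The key step is the algebraic identity $w_{x,c}(v) = J_x(c+v) - J_x(c) + h_x(c)$, valid for every $v$: expanding $\|c+v\|^2 = \|c\|^2 + 2\inner[\F]{c}{v} + \|v\|^2$ inside $J_x(c+v)$ and collecting the terms independent of $v$ reproduces precisely the linear term $\inner[\F]{v}{b_x+\tilde L c}$, the quadratic $\frac{\tilde L}{2}\|v\|^2$, and $h_x(c+v)$. Since $J_x(c)$ and $h_x(c)$ do not depend on $v$, the minimizer of $w_{x,c}$ coincides with the minimizer of $v\mapsto J_x(c+v)$, namely $v = c_x^{*} - c$. Hence $r_x(c) = c_x^{*} - c$, which gives the stronger statement $\|c - c_x^{*}\| = \|r_x(c)\| \le 2\|r_x(c)\|$, uniformly in $x$ because nothing in the bound depends on $x$ (only on the fixed constant $\tilde L$, and in fact not even on that).

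Alternatively, and this is the route that literally mirrors \cite{TY2009}, one may argue through first-order optimality: the optimality condition for $r_x(c)$ reads $-(b_x + \tilde L c + \tilde L\, r_x(c)) \in \partial h_x(c + r_x(c))$, and that for $c_x^{*}$ reads $-(b_x + \tilde L c_x^{*}) \in \partial h_x(c_x^{*})$; subtracting and using monotonicity of $\partial h_x$ yields $\tilde L\,\|(c+r_x(c)) - c_x^{*}\|^{2} \le 0$, so again $c + r_x(c) = c_x^{*}$. For the residual of \cite{TY2009}, whose quadratic term carries the coefficient $\tfrac12$ rather than $\tfrac{\tilde L}{2}$, this monotonicity inequality does not collapse and one instead extracts $\|(c+r_x(c))-c_x^{*}\| \le \tfrac{\tilde L-1}{\tilde L}\|r_x(c)\|$ by Cauchy--Schwarz, then $\|c - c_x^{*}\| \le \tfrac{2\tilde L-1}{\tilde L}\|r_x(c)\| < 2\|r_x(c)\|$ by the triangle inequality, which explains why the bound is recorded with the constant $2$. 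I do not expect a real obstacle here; the one place requiring care is the passage from retraction-convexity of $g$ to convexity of $h_x$ on all of $\R^{d}$, since this is simultaneously what makes $r_x$ and $c_x^{*}$ well posed and what licenses the monotonicity argument.
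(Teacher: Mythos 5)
Your proof is correct, and it actually establishes something stronger than the paper does, by a genuinely different (more elementary) route. The paper's proof follows the error-bound argument of \cite{TY2009}: it writes down the subgradient optimality conditions for $r_x(c)$ and for $c_x^*$ (its \eqref{e74} and \eqref{e75}), adds them, and then invokes $\tilde L$-strong convexity together with $\tilde L$-Lipschitz continuity of the gradient of the quadratic part plus Cauchy--Schwarz to arrive at $\tilde L\|c-c_x^*\|^2\le 2\tilde L\|c-c_x^*\|\,\|r_x(c)\|$, hence the constant $2$. You instead exploit the fact that the quadratic weight $\tfrac{\tilde L}{2}\|v\|^2$ in \eqref{e67} exactly matches the Hessian $\tilde L\,\I$ of the smooth part of $J_x$ in \eqref{e62}, so that $w_{x,c}(v)=J_x(c+v)-\bigl(J_x(c)-h_x(c)\bigr)$ identically; since the subtracted term is independent of $v$ and $J_x$ has a unique minimizer (here is where Assumption~\ref{as05}, via convexity of $h_x$, is needed, exactly as in the paper), you get $r_x(c)=c_x^*-c$ and the bound with constant $1$. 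Your monotonicity variant is essentially the paper's argument compressed: with the exact quadratic the added optimality conditions collapse to $c+r_x(c)=c_x^*$, whereas the paper discards the $\tilde L\|r_x(c)\|^2$ term in \eqref{e76} and keeps the looser factor $2$. What your observation buys is sharpness and brevity; what the paper's generic argument buys is robustness, since it is the template that survives when the quadratic no longer matches the curvature (e.g.\ the $\tfrac12\|v\|^2$ residual of \cite{TY2009}, as you note) and it parallels the perturbation comparison actually needed later for the computable residual $\tilde r_x$ in Lemma~\ref{le06}, where no such exact cancellation is available. Your side computation giving the constant $\tfrac{2\tilde L-1}{\tilde L}$ for the unit-weight residual implicitly uses $\tilde L\ge 1$, which is harmless here since $\tilde L$ is assumed large.
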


\begin{proof}

Let $\tilde{f}_x(c)$ denote $\grad f(x)^T Q_{x} c + \frac{\tilde{L}}{2} \|c\|^2$ and $\tilde{g}_{x}(c)$ denote $g(R_{x}(Q_{x}(c)))$. It follows that $J_x(c) = \tilde{f}_x(c) + \tilde{g}_x(c)$ and
\[
r_{x}(c) = \argmin_{v \in \mathbb{R}^d} \inner[\F]{v}{\nabla \tilde{f}_x(c)} + \frac{\tilde{L}}{2} \|v\|^2 + \tilde{g}_x(c + v).
\]
Therefore, we have
$
0 \in \nabla \tilde{f}_x(c) + \tilde{L} r_{x}(c) + \partial^E \tilde{g}_x(c + r_{x}(c)),
$
which implies
\[
r_{x}(c) = \argmin_{v \in \mathbb{R}^d} \inner[\F]{\nabla \tilde{f}_x(c) + \tilde{L} r_{x}(c)}{v} + \tilde{g}_x(c + v).
\]
It follows that
\begin{equation} \label{e74}
\inner[\F]{\nabla \tilde{f}_x(c) + \tilde{L} r_{x}(c)}{r_{x}(c)} + \tilde{g}_x(c + r_{x}(c)) \leq \inner[\F]{\nabla \tilde{f}_x(c) + \tilde{L} r_{x}(c)}{c_x^* - c} + \tilde{g}_x(c_x^*).
\end{equation}
Since $0 \in \nabla \tilde{f}_x(c_x^*) + \partial^E \tilde{g}_x(c_x^*)$, we have
$
c_x^* = \argmin_{v \in \mathbb{R}^n} \nabla \tilde{f}_x(c_x^*)^T v + \tilde{g}_x(v).
$
Therefore,
\begin{equation} \label{e75}
\inner[\F]{\tilde{f}_x(c_x^*)}{c_x^*} + \tilde{g}_x(c_x^*) \leq \inner[\F]{\nabla \tilde{f}_x(c_x^*)}{c + r_{x}(c)} + \tilde{g}_x(c + r_{x}(c)).
\end{equation}
Adding~\eqref{e74} to~\eqref{e75} yields
\begin{equation}\label{e76}
\inner[\F]{\tilde{f}_x(c) - \tilde{f}_x(c_x^*)}{c - c_x^*} + \tilde{L}\|r_{x}(c)\|^2 \leq \inner[\F]{\tilde{f}_x(c_x^*) - \tilde{f}_x(c)}{r_{x}(c)} + \tilde{L} \inner[\F]{r_{x}(c)}{c_x^* - c}.
\end{equation}
By definition of $\tilde{f}_x$, we have that $\tilde{f}_x$ is $\tilde{L}$-strongly convex and Lipschitz continuously differentiable with constant $\tilde{L}$. Therefore, \eqref{e76} yields 
\[
\tilde{L} \|c - c_x^*\|^2 \leq 2 \tilde{L} \|c - c_x^*\| \|r_{x}(c)\|,
\]
which implies $ \|c - c_x^*\| \leq 2 \|r_{x}(c)\|.$
\end{proof}

Computing the residual map~\eqref{e67} is usually impractical due to the existence of the retraction $R$ in $g$. Therefore, we use the same technique in~\cite[Section~3.5]{HuaWei2019b} to linearize $R_x(Q_x(c + v))$ by $R_x(Q_x c) + \mathcal{T}_{R_{Q_{x} c}} Q_{x} v$, \kwrev{and define a new residual map $\tilde{r}_{x}(c)$ that can be used to upper bound $r_x(c)$,
\begin{equation} \label{e68}
\tilde{r}_{x}(c) = \argmin_{v \in \mathbb{R}^d} \tilde{w}_{x, c}(v) := \inner[\F]{v}{ \grad f(x) + \tilde{L} Q_x c } + \frac{\tilde{L}}{2} \|\mathcal{T}_{R_{Q_{x} c}} Q_{x} v \|^2 + g(y + \mathcal{T}_{R_{Q_{x} c}} Q_{x} v ),
\end{equation}
where $y = R_x(Q_x c)$. A simple calcualtion can still show that 
\begin{align*}
    \tilde{r}_x(c^*)=0\Leftrightarrow c^*\mbox{ is the optimal solution to \eqref{e62}}.
\end{align*}
}Moreover, minimizing $\tilde{w}$ is the same as Problem~\eqref{e34} and therefore can be solved by the techniques in Section~\ref{sect:subprobGlobal}.
\begin{lemma} \label{le06}
 Let $\mathcal{G} \subset \mathcal{M}$ be a compact set. 
Suppose that Assumptions~\ref{as02} and~\ref{as05} hold, and that there exists a parallelizable set $\mathcal{U}$ such that $\mathcal{G} \subset \mathcal{U}$, where a set is callel parallelizable if $Q_x$ as a function of $x$ is smooth in $\mathcal{U}$\footnote{The notion of a parallelizable set is defined in~\cite{HAG13} and the function $Q$ is also called a local frame. The existence of a smooth $Q$ around any point $x \in \mathcal{M}$ can be found in~\cite{AMS2008,boumal2020intromanifolds}.}.
If $\tilde{L}$ is sufficient large, then there exist two constants $b>0$ and $\delta > 0$ such that
\begin{equation} \label{e71}
\|r_{x}(c)\| \leq b \|\tilde{r}_{x}(c)\|
\end{equation}
for all $x \in \mathcal{G}$ and $\|c\| < \delta$. 
\end{lemma}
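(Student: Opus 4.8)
The plan is to combine strong convexity of the two objectives in~\eqref{e67} and~\eqref{e68} with a quantitative estimate for their discrepancy near the origin. Write $w:=w_{x,c}$, $\tilde w:=\tilde w_{x,c}$, $u:=r_x(c)$, $\tilde u:=\tilde r_x(c)$, and let $G(x,c):=(\mathcal{T}_{R_{Q_x c}}Q_x)^{*}(\mathcal{T}_{R_{Q_x c}}Q_x)$ be the $d\times d$ Gram matrix of the linear map $\mathcal{T}_{R_{Q_x c}}Q_x$, which depends smoothly on $(x,c)$ and satisfies $G(x,0)=I_d$. Since $\mathcal{G}$ is compact and contained in the parallelizable set $\mathcal{U}$, the frame $Q$ is smooth on $\mathcal{U}$, and together with smoothness of $R$ and of the vector transport $\mathcal{T}_R$ and with~\eqref{e10}--\eqref{e11}, every ``constant'' below can be taken independent of $x\in\mathcal{G}$: $\|G(x,c)-I_d\|\le C_0\|c\|$, the second-order retraction remainder is $O(\|v\|^2)$ uniformly, $\|Q_x^{T}\grad f(x)\|\le U_f$, and the Jacobian of $R_x\circ Q_x$ is bounded by some $\kappa$. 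In particular, for $\|c\|<\delta$ with $\delta$ small we have $G(x,c)\succeq \tfrac12 I_d$, so --- using Assumption~\ref{as05} for convexity of $v\mapsto g(R_x(Q_x(c+v)))$ (on the relevant small neighbourhood) and Assumption~\ref{as02} for convexity of $v\mapsto g(y+\mathcal{T}_{R_{Q_x c}}Q_x v)$ --- $w$ is $\tilde L$-strongly convex and $\tilde w$ is $\alpha$-strongly convex with $\alpha\ge\tilde L/2$. Finally, the first-order optimality conditions give $\tilde L\|u\|\le U_f+\tilde L\|c\|+L_g\kappa$ and a similar bound for $\tilde u$, so by taking $\tilde L$ large and $\delta$ small we may assume $\|u\|$ and $\|\tilde u\|$ are as small as needed for all the points entering $w$ and $\tilde w$ to lie in the neighbourhoods where retraction-convexity and the Taylor estimates are valid.

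Next, from strong convexity and optimality, $w(\tilde u)\ge w(u)+\tfrac{\tilde L}{2}\|u-\tilde u\|^2$ and $\tilde w(u)\ge \tilde w(\tilde u)+\tfrac{\alpha}{2}\|u-\tilde u\|^2$; adding these and using $\alpha\ge\tilde L/2$ gives
\begin{equation*}
\|u-\tilde u\|^{2}\le \frac{4}{3\tilde L}\bigl(|w(\tilde u)-\tilde w(\tilde u)|+|w(u)-\tilde w(u)|\bigr).
\end{equation*}
It remains to bound $|w(v)-\tilde w(v)|$ for small $\|v\|$. A direct computation shows that $w(v)-\tilde w(v)$ is the difference of the two quadratic terms plus the difference of the two $g$-terms. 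The former equals $\tfrac{\tilde L}{2}\,|v^{T}(I_d-G(x,c))v|\le \tfrac{\tilde LC_0}{2}\|c\|\,\|v\|^{2}$, while the latter is at most
\begin{equation*}
L_g\bigl\|R_x(Q_x(c+v))-R_x(Q_x c)-\mathcal{T}_{R_{Q_x c}}Q_x v\bigr\|\le L_gC_1\|v\|^{2},
\end{equation*}
by Lipschitz continuity of $g$ (Assumption~\ref{as02}) and the second-order Taylor expansion of $R_x$ about $Q_x c$ (the case $c=0$ is~\eqref{e11}; the general bound holds uniformly on $\mathcal{G}$ by smoothness, cf.~\cite[Section~3.5]{HuaWei2019b}). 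Hence $|w(v)-\tilde w(v)|\le(\tfrac{\tilde LC_0}{2}\|c\|+L_gC_1)\|v\|^{2}$, and combining,
\begin{equation*}
\|u-\tilde u\|^{2}\le \gamma\,\bigl(\|u\|^{2}+\|\tilde u\|^{2}\bigr),\qquad \gamma:=\frac{2C_0\delta}{3}+\frac{4L_gC_1}{3\tilde L}.
\end{equation*}

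Finally, by the triangle inequality and $\sqrt{a^{2}+b^{2}}\le a+b$,
\begin{equation*}
\|u\|\le \|\tilde u\|+\|u-\tilde u\|\le \|\tilde u\|+\sqrt{\gamma}\,(\|u\|+\|\tilde u\|),
\end{equation*}
so, choosing $\delta$ small and $\tilde L$ large enough that $\gamma<1$ (note $\gamma\to0$ as $\delta\to0$ and $\tilde L\to\infty$), we obtain $\|u\|\le \tfrac{1+\sqrt\gamma}{1-\sqrt\gamma}\|\tilde u\|$, which is~\eqref{e71} with $b:=\tfrac{1+\sqrt\gamma}{1-\sqrt\gamma}$ and this $\delta$. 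I expect the main difficulty to be the bookkeeping in the first paragraph: ensuring all perturbation constants are uniform over $x\in\mathcal{G}$ and that $r_x(c)$ and $\tilde r_x(c)$ are small enough on $\{\|c\|<\delta\}$ for retraction-convexity (Assumption~\ref{as05}) and the retraction Taylor estimates to apply --- this is exactly what compactness of $\mathcal{G}$ and the parallelizable-set hypothesis buy us. Once that is arranged, the strong-convexity comparison and the concluding algebra are routine.
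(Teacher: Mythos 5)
Your proof is correct and follows essentially the same route as the paper: a uniform $O(\|v\|^2)$ bound on $|w_{x,c}(v)-\tilde w_{x,c}(v)|$ obtained from compactness of $\mathcal{G}$, smoothness of the frame and the differentiated retraction, and Lipschitz continuity of $g$, followed by a strong-convexity comparison of the two minimizers and elementary algebra. The only cosmetic differences are that you add the strong-convexity inequalities for both $w_{x,c}$ and $\tilde w_{x,c}$ (the paper instead uses strong convexity of one function together with the function-value sandwich, as in the proof of Theorem~\ref{th02}), and your sharper $O(\|c\|)\|v\|^2$ estimate of the quadratic-term discrepancy produces a slightly different, but equally valid, constant $b$.
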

\begin{proof}
Since $Q_x$ is smooth in $\mathcal{U}$ and $\mathcal{T}_R$ is smooth, we have that the function $z:\mathcal{U} \times \mathbb{R}^d \rightarrow \mathbb{R}^{d \times d}: (x, c) \mapsto Q_y^T \mathcal{T}_{R_{Q_x c}} Q_x$ is a smooth function, where $y = R_x(Q_x c)$. Furthermore, since $\mathcal{T}_{R_{0_x}}$ is an identity for any $x \in \mathcal{M}$, we have $z(x, 0) = I_d$ for any $x \in \mathcal{M}$. It follows that
\begin{equation} \label{e77}
\|Q_y^T \mathcal{T}_{R_{Q_x c}} Q_x - I_d\| \leq L_J \|c\|, \hbox{ for any } x \in \mathcal{G}, \|c\| \leq \delta,
\end{equation}
where $L_J = \max_{x \in \mathcal{G}, \|c\|\leq \delta} \|J_z(x, Q_x c)\|$. Since the set of $\{(x, c) : x \in \mathcal{G}, \|c\|\leq \delta\}$ is compact and the Jacobi $J_z$ is continuous by smoothness of $z$, we have $L_J < \infty$.

Using~\eqref{e77} and noting $\|\mathcal{T}_{R_{Q_x c}}\| = \|Q_y^T \mathcal{T}_{R_{Q_x c}} Q_x\|$ yields
\begin{align*}
\|\mathcal{T}_{R_{Q_x c}}\| \leq \|I_d\| + \|Q_y^T \mathcal{T}_{R_{Q_x c}} Q_x - I_d\| \leq 1 + L_J \|c\|
\end{align*}
and
\begin{align*}
\|\mathcal{T}_{R_{Q_x c}}^{-1}\| - \|I_d\| &\leq \|I_d - (Q_y^T \mathcal{T}_{R_{Q_x c}} Q_x)^{-1}\| \\
&\leq \|\mathcal{T}_{R_{Q_x c}}^{-1}\| \| Q_y^T \mathcal{T}_{R_{Q_x c}} Q_x - I_d\| \leq L_J \|c\| \|\mathcal{T}_{R_{Q_x c}}^{-1}\|,
\end{align*}
which gives
\begin{equation*}
(1 - L_J \|c\|) \|\mathcal{T}_{R_{Q_x c}}^{-1}\| \leq 1.
\end{equation*}
Therefore, by choosing $\delta < \min(\sqrt{3/2}-1, 1 - 1/\sqrt{2}) / L_J$, we have
\begin{equation} \label{e78}
\|\mathcal{T}_{R_{Q_x c}}\| \leq \sqrt{3 / 2}, \hbox{ and } \|\mathcal{T}_{R_{Q_x c}}^{-1}\| \leq \sqrt{2}.
\end{equation}
\whcomm{}{
It follows that
\begin{align*}
\|\mathcal{T}_{R_{Q_x c}} Q_x v\|^2 - \|v\|^2 \leq (\|\mathcal{T}_{R_{Q_x c}}\|^2 - 1) \|v\|^2 \leq \frac{1}{2} \|v\|^2 \hbox{ and } \\
\|\mathcal{T}_{R_{Q_x c}} Q_x v\|^2 - \|v\|^2 \geq \left( \frac{1}{\|\mathcal{T}_{R_{Q_x c}}^{-1}\|^2} - 1 \right) \|v\|^2 \geq - \frac{1}{2} \|v\|^2,
\end{align*}
which yields
\[
|\|\mathcal{T}_{R_{Q_x c}} Q_x v\|^2 - \|v\|^2| \leq \frac{1}{2} \|v\|^2.
\]
}
By the compactness of $\mathcal{G}$, there exists a constant $\tilde{\chi}_2$ such that
\begin{equation} \label{e69}
\|R_x(Q_x (c + v)) - R_x(Q_x c) - \mathcal{T}_{R_{Q_x c}} Q_x v \| \leq \tilde{\chi}_2 \|\mathcal{T}_{R_{Q_x c}} Q_x v\|^2,
\end{equation}
for all $x, R_x(Q_x c), R_x(Q_x (c+v)) \in \mathcal{G}$.

Therefore, we have
\begin{align*}
|w_{x, c}(v) &- \tilde{w}_{x, c}(v)| \\
\leq& \left|\frac{\tilde{L}}{2} \|v\|^2 + g(R_x( Q_x(c + v))) - \frac{\tilde{L}}{2} \|\mathcal{T}_{R_{Q_{x} c}} Q_{x} v \|^2 + g(y + \mathcal{T}_{R_{Q_{x} c}} Q_{x} v )\right| \\
\leq& \frac{\tilde{L}}{2} |\|\mathcal{T}_{R_{Q_x c}} Q_x v\|^2 - \|v\|^2| + L_g \tilde{\chi}_2 \|\mathcal{T}_{R_{Q_x c}}\|^2 \|v\|^2 \\
\leq& C_R \|v\|^2,
\end{align*}
where $C_R = \tilde{L} / 4 + 2 L_g \tilde{\chi}_2$, the second inequality follows from~\eqref{e69} and Assumption~\ref{as02}, and the last inequality follows from~\eqref{e78}.

Since $w_{x, c}$ and $\tilde{w}_{x, c}$ are both strongly convex, their minimizers $r_{x}(c) \in \mathbb{R}^d$ and $\tilde{r}_{x}(c) \in \T_y \mathcal{M}$ are unique. 
By the same derivation in Theorem~\ref{th02}, 
we have that
\[
\frac{\tilde{L}}{2} \|r_{x}(c) - \tilde{r}_{x}(c)\|^2 - C_R \| r_{x}(c) \|^2 \leq C_R \| \tilde{r}_{x}(c) \|^2,
\]
which implies
\begin{align*}
\left( \sqrt{\frac{\tilde{L}}{2}} - \sqrt{C_R} \right) \|r_{x}(c)\| \leq& \left( \sqrt{\frac{\tilde{L}}{2}} + \sqrt{C_R} \right) \| \tilde{r}_{x}(c) \|, \quad \hbox{ for all $k > k_0$.}
\end{align*}
By assuming $\tilde{L} > 8 L_g \tilde{\chi}_2$, we have that~\eqref{e71} holds with $b = \frac{\sqrt{\tilde{L}} + \sqrt{2 C_R}}{\sqrt{\tilde{L}}  - \sqrt{2 C_R}}$.
\end{proof}

The main result is stated in Theorem~\ref{th04}, which follows from Lemmas~\ref{le05} and~\ref{le06}. 
It shows that if the Riemannian proximal mapping is solved sufficiently accurate such that the computable $\tilde{r}_{x_k}(\bar{c}_k)$ satisfies~\eqref{e72}, then the difference $\|\bar{\eta}_{x_k} - \eta_{x_k}^*\|$ is controlled from above by the prescribed function $\psi$.
An algorithm that  achieves \eqref{e72} can be found in~\cite[Algorithm~2]{HuaWei2019b} by adjusting its stopping criterion to~\eqref{e72}.
\begin{theorem} \label{th04}
Let $\mathcal{S}$ denote the set of all accumulation points of $\{x_k\}$. Suppose that there exists a neighborhood of $\mathcal{S}$, denoted by $\mathcal{U}$, such that $\mathcal{U}$ is a parallelizable set, that Assumptions~\ref{as10}, ~\ref{as3}, ~\ref{as02} and~\ref{as05} hold, that $\tilde{L}$ is sufficiently large, and
that an algorithm is used to solve
\begin{equation*}
\min_{c \in \mathbb{R}^d} J_{x_k}(c) := \inner[\F]{c}{Q_x^T \grad f(x_k)} + \frac{\tilde{L}}{2} \|c\|^2 + g(R_{x_k}(Q_{x_k} c)),
\end{equation*}
such that the output $\bar{c}_k \in \mathbb{R}^d$ of the algorithm satisfies
\begin{equation} \label{e72} 
\|\tilde{r}_{x_k}(\bar{c}_k)\| \leq \psi(\varepsilon_k, \varrho, \|\bar{c}_k\|),
\end{equation}
where $x_k$ is the $k$-th iterate of Algorithm~\ref{a1} and $\psi$ is a function from $\mathbb{R}^3$ to $\mathbb{R}$.
Then there exists a constant $\tilde{a} > 0$ and an integer $\tilde{K} > 0$ such that for all $k > \tilde{K}$, it holds that
\begin{equation} \label{e73} 
\|\bar{\eta}_{x_k} - \eta_{x_k}^*\| \leq \tilde{a} \psi(\varepsilon_k, \varrho, \|\bar{\eta}_{x_k}\|),
\end{equation}
where $\bar{\eta}_{x_k} = Q_{x_k} \bar{c}_k$.
Moreover, if $\psi(\varepsilon_k, \varrho, t) = \varepsilon_k^2$, then inequality~\eqref{e28} holds; if $\psi(\varepsilon_k, \varrho, t) = \min(\varepsilon_k^2, \varrho t^2)$ with $\varrho < \frac{\beta}{2 L_F a}$, then inequality~\eqref{e29} holds.
\end{theorem}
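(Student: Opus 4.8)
The plan is to chain together the two error-bound estimates already established, transferring an accuracy guarantee on the computable surrogate residual $\tilde r_{x_k}(\bar c_k)$ to the quantity $\|\bar\eta_{x_k}-\eta_{x_k}^*\|$ that actually appears in the conditions~\eqref{e01}, \eqref{e28}, \eqref{e29} of Algorithm~\ref{a1}. First I would observe that since $\mathcal{S}$ is compact (this follows, as in the proof of Theorem~\ref{single}, from $\|\hat\eta_{x_k}\|\to 0$ and~\cite[Lemma~6]{HuaWei2019b}) and $\mathcal{U}$ is an open parallelizable neighborhood of $\mathcal{S}$, all but finitely many iterates $x_k$ lie in a fixed compact set $\mathcal{G}\subset\mathcal{U}$; moreover, by Theorem~\ref{globaltheo} and Lemma~\ref{le3} we have $\|\hat\eta_{x_k}\|\to 0$, so $\|\bar c_k\|=\|\bar\eta_{x_k}\|\to 0$ as well (using $\ell_{x_k}(0)\ge\ell_{x_k}(\bar\eta_{x_k})$, strong convexity gives a bound on $\bar\eta_{x_k}$ comparable to $\eta_{x_k}^*$, which vanishes). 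Hence there is an integer $\tilde K$ so that for $k>\tilde K$ both $x_k\in\mathcal{G}$ and $\|\bar c_k\|<\delta$, with $\delta$ the radius from Lemma~\ref{le06}.

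For such $k$, Lemma~\ref{le06} gives $\|r_{x_k}(\bar c_k)\|\le b\,\|\tilde r_{x_k}(\bar c_k)\|$, and Lemma~\ref{le05} gives $\|\bar c_k - c_{x_k}^*\|\le 2\,\|r_{x_k}(\bar c_k)\|$, where $c_{x_k}^*$ is the exact minimizer of $J_{x_k}$. Since $Q_{x_k}$ is a linear isometry from $\mathbb{R}^d$ onto $\T_{x_k}\mathcal{M}$ (its columns form an orthonormal basis), we have $\bar\eta_{x_k}=Q_{x_k}\bar c_k$, $\eta_{x_k}^*=Q_{x_k}c_{x_k}^*$, and $\|\bar\eta_{x_k}-\eta_{x_k}^*\| = \|\bar c_k-c_{x_k}^*\|$ and $\|\bar\eta_{x_k}\|=\|\bar c_k\|$. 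Combining the three facts yields
\[
\|\bar\eta_{x_k}-\eta_{x_k}^*\| \;\le\; 2b\,\|\tilde r_{x_k}(\bar c_k)\| \;\le\; 2b\,\psi(\varepsilon_k,\varrho,\|\bar c_k\|) \;=\; 2b\,\psi(\varepsilon_k,\varrho,\|\bar\eta_{x_k}\|),
\]
so~\eqref{e73} holds with $\tilde a = 2b$. The two special cases then follow by inspection: if $\psi(\varepsilon_k,\varrho,t)=\varepsilon_k^2$ one must check it is legitimate to absorb the constant $\tilde a$; here one instead runs the algorithm with tolerance scaled so that $\tilde a\psi\le\varepsilon_k^2$ (equivalently replace $\varepsilon_k$ by $\varepsilon_k/\sqrt{\tilde a}$, which does not affect summability of $\sum\varepsilon_k$), giving~\eqref{e28}; and if $\psi(\varepsilon_k,\varrho,t)=\min(\varepsilon_k^2,\varrho t^2)$ then $\tilde a\psi(\varepsilon_k,\varrho,\|\bar\eta_{x_k}\|)\le\min(\tilde a\varepsilon_k^2,\ \tilde a\varrho\|\bar\eta_{x_k}\|^2)$, and the choice $\varrho<\frac{\beta}{2L_F\tilde a}$ makes the second term at most $\frac{\beta}{2L_F}\|\bar\eta_{x_k}\|^2$, which is exactly~\eqref{e29} (after again rescaling $\varepsilon_k$ for the first term).

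The main obstacle I expect is bookkeeping rather than any deep difficulty: one has to be careful that the compact set $\mathcal{G}$, the constants $b$ and $\delta$ from Lemma~\ref{le06}, the isometry identifications via $Q_{x_k}$, and the ``sufficiently large $\tilde L$'' hypotheses (needed for Lemmas~\ref{le05} and~\ref{le06}, and implicitly for $\|\bar\eta_{x_k}\|\to 0$ via Lemma~\ref{le3} with $\tilde L>L$) are all compatible and can be chosen uniformly for all large $k$. A secondary subtlety is justifying $\|\bar c_k\|\to 0$: this needs that the computed $\bar\eta_{x_k}$ satisfies the descent inequality $\ell_{x_k}(0)\ge\ell_{x_k}(\bar\eta_{x_k})$ — which should be built into the stopping rule of the subproblem solver, exactly as in the ManPG-type analysis of Section~\ref{sect:subprobGlobal} — so that Lemma~\ref{le3} applies and forces $\|\bar\eta_{x_k}\|\to 0$, after which $\|\eta_{x_k}^*\|\to 0$ follows from~\eqref{e73} itself together with the continuity of $\psi$ and $\psi(\cdot,\cdot,0)=0$.
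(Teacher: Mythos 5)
Your proposal is correct and follows essentially the same route as the paper's proof: compactness of $\mathcal{S}$ places $x_k$ in a compact $\mathcal{G}\subset\mathcal{U}$ for large $k$, the descent property forces $\|\bar c_k\|=\|\hat\eta_{x_k}\|<\delta$ eventually, and then Lemma~\ref{le06} combined with the error bound of Lemma~\ref{le05} and the isometry $Q_{x_k}$ yields \eqref{e73} with $\tilde a=2b$. Your explicit handling of the constant absorption (rescaling $\varepsilon_k$) in the two special cases is a minor elaboration the paper leaves implicit, not a different approach.
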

\begin{proof}
By~\eqref{eq:kwrev01a} and~\cite[Remark~5]{BST2014}, we have that $\mathcal{S}$ is a compact set. Therefore, there exists a compact set $\mathcal{G}$ and an integer $\tilde{K} > 0$ such that $\mathcal{S} \subset \mathcal{G} \subset \mathcal{U}$ and it holds that $x_k \subset \mathcal{G}$ for all $k > \tilde{K}$. By Lemma~\ref{le06}, there exists two constants $b> 0$ and $\delta > 0$ such that 
$
\|r_{x}(c)\| \leq b \|\tilde{r}_{x}(c)\|
$
for all $x \in \mathcal{G}$ and $\|c\| < \delta$. In addition, it follows from~\eqref{e79} that there exists a constant $\tilde{K}_+ > 0$ such that for all $k > \tilde{K}_+$ it holds that $\|\hat{\eta}_{x_k}\| < \delta$. Therefore, for all $k > \max(\tilde{K}, \tilde{K}_+)$, we have
\begin{equation} \label{e80}
\|r_{x_k}(\bar{c}_k)\| \leq b \|\tilde{r}_{x_k}(\bar{c}_k)\|.
\end{equation}
The result~\eqref{e73} follows from~\eqref{e72} and~\eqref{e80}.
\end{proof}

\whcomm{}{
For simplicity, we define $\tilde{r}_{x_k}(c)$ as the minimizer of $\tilde{w}_{x_k, c}(v)$. Indeed, we can show that it is not necessary to optimize $\tilde{w}_{x_k, c}(v)$ exactly. Suppose that the minimizer $c_{x_k}^*$ of $J_{x_k}(c)$ is nonzero, that a converging algorithm is used to optimize $J_{x_k}(c)$ and let $\{c_i\}$ denote the generated sequence, and that $\tilde{w}_{x_k, c}(v)$ is only solved approximately such that the approximated solution, denoted by $\tilde{\tilde{r}}_{x_k}(c_i)$, satisfies $\|\tilde{\tilde{r}}_{x_k}(c_i) - \tilde{r}_{x_k}(c_i)\| \leq \delta_r \|\tilde{\tilde{r}}_{x_k}(c_i)\|$, where $\delta_r \in (0, 1)$ is a constant
\footnote{Note that $\tilde{w}_{x_k, c}(v)$ has the same format as~\eqref{e34}. We can use condition~\eqref{e58} and~\eqref{e60} to find the approximate solution $\tilde{\tilde{r}}_{x_k}(c_i)$.}.
Then we have
\begin{equation} \label{e81}
(1 - \delta_r) \|\tilde{\tilde{r}}_{x_k}(c_i)\| \leq \|\tilde{r}_{x_k}(c_i)\| \leq (1 + \delta_r) \|\tilde{\tilde{r}}_{x_k}(c_i)\|.
\end{equation}
It follows that if 
\begin{equation} \label{e84}
\|\tilde{\tilde{r}}_{x_k}(c_i)\| \leq \frac{1}{1+\delta_r} \psi(\varepsilon_k, \varrho, \|c_i\|),
\end{equation}
then~\eqref{e72} holds. Since a converging algorithm is used, we have $c_i$ goes to $c_{x_k}^*$ and $\tilde{r}_{x_k}(c_i)$ goes to zero. It follows that $\psi(\varepsilon_k, \varrho, \|c_i\|)$ is greater than a positive constant for all $i$ and $\tilde{\tilde{r}}_{x_k}(c_i)$ goes to zero by~\eqref{e81}. Therefore, an iterate $c_i$, denoted by $\bar{c}_k$, satisfying inequality~\eqref{e84} can be found.
}

\slversions{}{
\subsection{Implementations of $B_x^T$ and $B_x$} \label{sec:Impl}

In this section, the implementations of the functions $B_x^T: \mathbb{R}^n \rightarrow \mathbb{R}^{n - d}$ and $B_x: \mathbb{R}^{n - d} \rightarrow \mathbb{R}^n$ are given for Grassmann manifold, manifold of fixed-rank matrices, manifold of symmetric positive definite matrices, and products of manifolds. Note that the Riemannian metric is chosen to be the Euclidean metric in this section.

\paragraph{Grassmann manifold:} We consider the representation of Grassmann manifold by
\[
\Gr(p, n) = \{[X] : X \in \St(p, n)\},
\]
where $[X] = \{X O : O^T O = I_p\}$. The ambient space of $\Gr(p, n)$ is $\mathbb{R}^{n \times p}$ and the orthogonal complement space of the horizontal space $\mathcal{H}_X$ at $X \in \St(p, n)$ is given by
\[
\mathcal{H}_X^{\perp} = \{X M : M \in \mathbb{R}^{p \times p} \}.
\]
Therefore, we have
\begin{align*}
&B_X^T: \mathbb{R}^{n \times p} \rightarrow \mathbb{R}^{p \times p}: Z \rightarrow X^T Z, \hbox{ and } \\
&B_X: \mathbb{R}^{p \times p} \rightarrow \mathbb{R}^{n \times p}: M \rightarrow X M.	
\end{align*}

\paragraph{Manifold of fixed-rank matrices:} The manifold is given by
\[
\mathbb{R}_r^{m \times n} = \{X \in \mathbb{R}^{m \times n} : \mathrm{rank}(X) = r\}.
\]
The ambient space is therefore $\mathbb{R}^{m \times n}$. Given $X \in \mathbb{R}_r^{m \times n}$, let $X = U S V$ be a thin singular value decomposition. The normal space at $X$ is given by
\[
\N_X \mathbb{R}_r^{m \times n} = \{U_{\perp} M V_{\perp}^T : M \in \mathbb{R}^{(m - r) \times (n - r)} \},
\]
where $U_{\perp} \in \mathbb{R}^{m \times (m - r)}$ forms an orthonormal basis of the perpendicular space of $\mathrm{span}(U)$ and $V_{\perp} \in \mathbb{R}^{n \times (n - r)}$ forms an orthonormal basis of the perpendicular space of $\mathrm{span}(V)$. It follows that
\begin{align*}
&B_X^T: \mathbb{R}^{m \times n} \rightarrow \mathbb{R}^{(m - r) \times (n - r)}: Z \mapsto U_{\perp}^T Z V_{\perp}, \hbox{ and }	\\
&B_X: \mathbb{R}^{(m - r) \times (n - r)} \rightarrow \mathbb{R}^{m \times n}: M \mapsto U_{\perp} M V_{\perp}^T.
\end{align*}
Note that it is not necessary to form the matrices $U_{\perp}$ and $V_{\perp}$. One can use~\cite[Algorithms~4 and~5]{HAG2016VT} to implement the actions of $U_{\perp}$, $U_{\perp}^T$, $V_{\perp}$, and $V_{\perp}^T$.

\paragraph{Manifold of symmetric positive semi-definite matrices:} The manifold is
\[
\mathbb{S}_r^{n \times n} = \{X \in \mathbb{R}^{n \times n} : X = X^T, X \succeq 0, \mathrm{rank}(X) = r \}.
\]
The ambient space is $\mathbb{R}^{n \times n}$. Given $X \in \mathbb{S}_r^{n \times n}$, let $X = H H^T$, where $H \in \mathbb{R}^{n \times r}$ is full rank. The normal space at $X$ is
\[
\N_X \mathbb{S}_r^{n \times n} = \{H_{\perp} M H_{\perp}^T: M \in \mathbb{R}^{(n - r) \times (n - r)}, M = M^T \},
\]
where $H_{\perp} \in \mathbb{R}^{n \times (n - r)}$ forms an orthonormal basis of the perpendicular space of $\mathrm{span}(H)$.
Therefore, we have
\begin{align*}
&B_X^T: \mathbb{R}^{n \times n} \rightarrow \mathbb{R}^{\frac{(n - r) (n - r + 1)}{2}}: Z \mapsto  \mathrm{vec}\left(\frac{1}{2} H_{\perp}^T (Z + Z^T) H_{\perp}\right), \hbox{ and }	\\
&B_X: \mathbb{R}^{\frac{(n - r) (n - r + 1)}{2}} \rightarrow \mathbb{R}^{n \times n}: v \mapsto H_{\perp} \mathrm{vec}^{-1}(v) H_{\perp}^T,
\end{align*}
where $\mathrm{vec}(M) = (M_{11}, M_{22}, \ldots, M_{ss}, \sqrt{2} M_{12}, \sqrt{2} M_{13}, \sqrt{2} M_{1s}, \ldots, \sqrt{2} M_{(s - 1) s})^T$ for $M \in \mathbb{R}^{s \times s}$ being a symmetric matrix, and $\mathrm{vec}^{-1}$ is the inverse function of $\mathrm{vec}$.

\paragraph{Product of manifolds:} Let the product manifold $\mathcal{M}$ be denoted by $\mathcal{M}_1 \times \mathcal{M}_2 \times \ldots \times \mathcal{M}_t$. Let the ambient space of $\mathcal{M}_i$ be $\mathbb{R}^{n_i}$ and the dimension of $\mathcal{M}_i$ be $d_i$. For any $X = (X_1, X_2, \ldots, X_t) \in \mathcal{M}$, the mappings $B_X^T$ and $B_X$ are given by
\begin{align*}
B_X^T&: \mathbb{R}^{n_1} \times \mathbb{R}^{n_2} \times \ldots \times \mathbb{R}^{n_t} \rightarrow \mathbb{R}^{(n_1 - d_1 + n_2 - d_2 + \ldots + n_t - d_t)} \\
&: (Z_1, Z_2, \ldots, Z_t) \mapsto \left( (B_{X_1}^T Z_1)^T, (B_{X_2}^T Z_2)^T, \ldots, (B_{X_t}^T Z_t)^T \right)^T, \hbox{ and } \\
B_X&: \mathbb{R}^{(n_1 - d_1 + n_2 - d_2 + \ldots + n_t - d_t)} \rightarrow \mathbb{R}^{n_1} \times \mathbb{R}^{n_2} \times \ldots \times \mathbb{R}^{n_t} \\
&: (v_1^T, v_2^T, \ldots, v_t^T)^T \mapsto (B_{X_1} v_1, B_{X_2} v_2, \ldots, B_{X_t} v_t),
\end{align*}
where $B_{X_i}^T$ and $B_{X_i}$ denote the mappings for manifold $\mathcal{M}_i$ at $X_i$, and $v_i \in \mathbb{R}^{n_i - d_i}$, $i = 1, \ldots, t$.
}

\section{Numerical Experiments} \label{sect:NumExp}


In this section, we use the sparse principle component analysis (SPCA) problem to test the proposed practical conditions on the  accuracy for solving the Riemannian proximal mapping~\eqref{e61}.
\subsection{Experimental Settings}

Since practically a sufficiently large $\tilde{L}$ is unknown, we dynamically increase its value by $\tilde{L} \gets 1.5 \tilde{L}$ if the search direction is not descent in the sense that back tracking algorithm $\alpha^{(i+1)} = 0.5 \alpha^{(i)}$ with $\alpha^{(0)} = 1$ for finding a step size fails for 5 iterations. In addition, the initial value of $\tilde{L}$ at $k+1$-th iteration, denoted by $\tilde{L}_{k+1}$, is given by the Barzilar-Borwein step size with safeguard:
\[
\tilde{L}_{k+1} = \min( \max( \left| \frac{\inner[]{y_k}{y_k}}{\inner[]{y_k}{s_k}} \right|, \tilde{L}_{\min} ), \tilde{L}_{\max}), 
\]
where $\tilde{L}_{\min} > 0$, $\tilde{L}_{\max} > 0$, $y_k = P_{\T_{x_{k}} \mathcal{M}} \grad f(x_{k+1}) - \grad f(x_k)$ and $s_k = \alpha \eta_{x_k}$. The value of $\tilde{L}_{0}$ is problem dependent and will be specified later.
The parameters are given by $\tilde{L}_{\min} = 10^{-3}$, $\tilde{L}_{\max} = \tilde{L}_0$, $\phi:\mathbb{R} \rightarrow \mathbb{R}: t \mapsto \sqrt{t}$, $\varepsilon_k = \frac{500}{(1+k)^{1.01}}$, and $\varrho = 100$.

Let IRPG-G, IRPG-U, and IRPG-L respectively denote Algorithm~\ref{a1} with the subproblem solved accurately enough in the sense that~\eqref{e58} and~\eqref{e60} hold, ~\eqref{e72} holds with $\psi(\varepsilon_k, \rho, \|\eta\|) = \varepsilon_k^2$, and~\eqref{e72} holds with $\psi(\varepsilon_k, \rho, \|\eta\|) = \min(\varepsilon_k^2, \varrho \|\eta\|^2)$. Unless otherwise indicated, IRPG-G stops if the value of $(\|\eta_{x_k}\| \tilde{L}_k)$ reduces at least by a factor of $10^{-3}$. IRPG-U and IRPG-L stop if their objective function values are smaller than the function value of the last iterate given by IRPG-G.


All the tested algorithms are implemented in the ROPTLIB package \cite{HAGH2016} using C++, with a MATLAB interface. The experiments are performed in Matlab R2018b on a 64 bit Ubuntu platform with 3.5GHz CPU (Intel Core i7-7800X).

\subsection{SPCA Test}~\label{sec:SPCA}

An optimization model for the sparse principle component analysis is given by
\begin{align} \label{spcast}
&\min_{X \in \mathrm{St}(p, n)} - \trace(X^T A^T A X) + \lambda \|X\|_1,
\end{align}
where $A \in \mathbb{R}^{m \times n}$ is the data matrix. This model is a penalized version of the ScoTLASS model introduced in~\cite{JoTrUd2003a} and it has been used in~\cite{CMSZ2019,HuaWei2019}. 

\paragraph{Basic settings} A matrix $\tilde{A} \in \mathbb{R}^{m \times n}$ is first generated such that its entries are drawn from the standard normal distribution. Then the matrix $A$ is created by shifting and normalized columns of $\tilde{A}$ such that the columns have mean zero and standard deviation one. The parameter $\tilde{L}_0$ is $2 \lambda_{\max}(A)^2$, where $\lambda_{\max}(A)$ denotes the largest singular value of $A$. The initial iterate is the leading $r$ right singular vectors of the matrix $A$. The Riemannian optimization tools including the Riemannian gradient, the retraction by polar decomposition, 
the inverse vector transport by differentiated the retraction, and the adjoint operator of the inverse vector transport by differentiated the retraction can be found in~\cite{HuaWei2019b}.

\paragraph{Empirical Observations} Figure~\ref{fig:SPCA} shows the performance of IRPG-G, IRPG-U, and IRPG-L with multiple values of $n$, $p$, and $\lambda$. Since IRPG-G, IRPG-U, and IRPG-L solve the Riemannian proximal mapping up to different accuracy, we find that IRPG-G takes notably more iterations than IRPG-U, and IRPG-U takes slightly more iterations than IRPG-L, which coincides with our theoretical results. Though IRPG-U and IRPG-L take fewer iterations, their computational times are still larger than that of IRPG-G due to the excessive cost on improving the accuracy of the Riemannian proximal mapping.



 
\begin{figure}[ht!]
\centerline{
\includegraphics[width=0.45\textwidth]{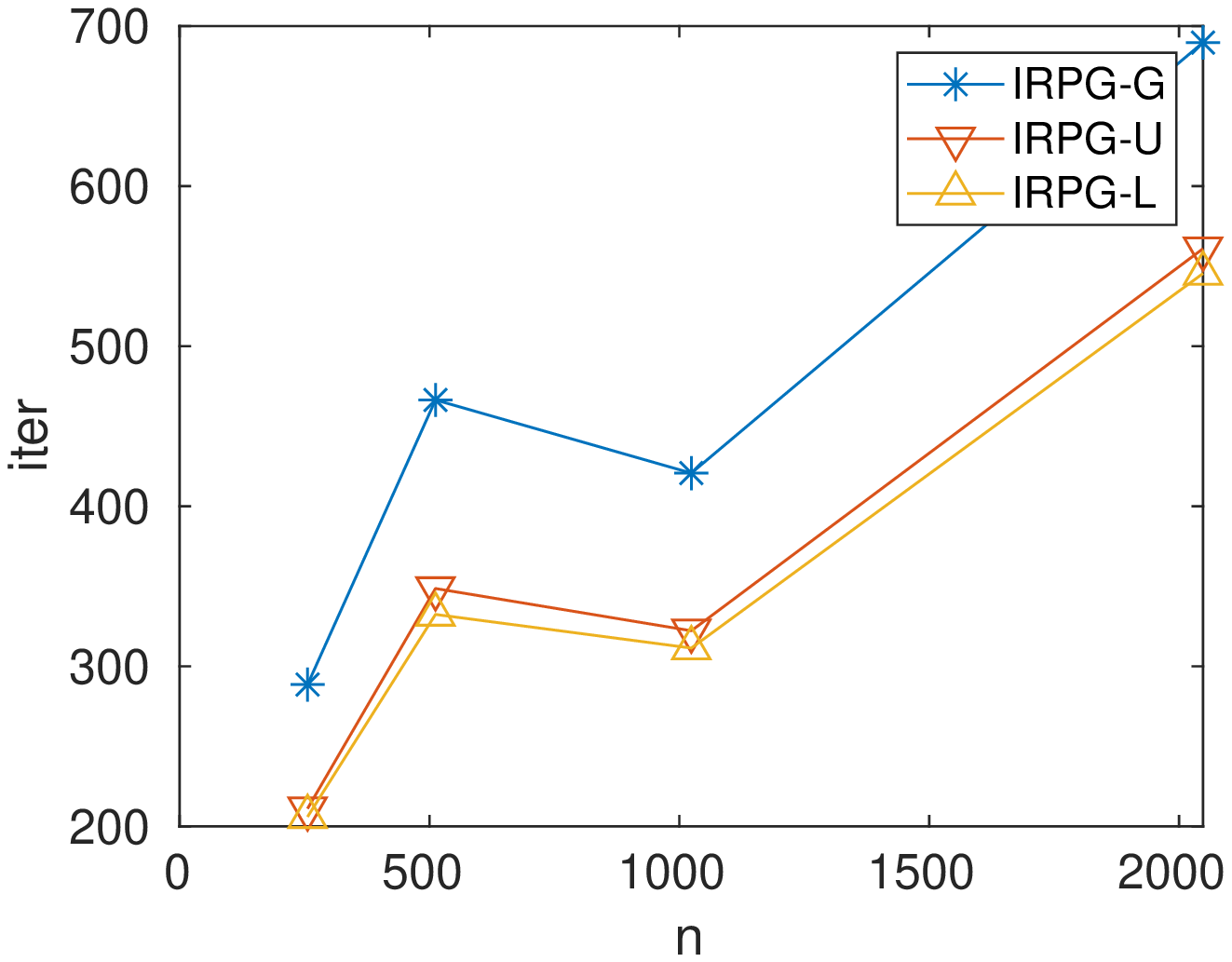}
\includegraphics[width=0.45\textwidth]{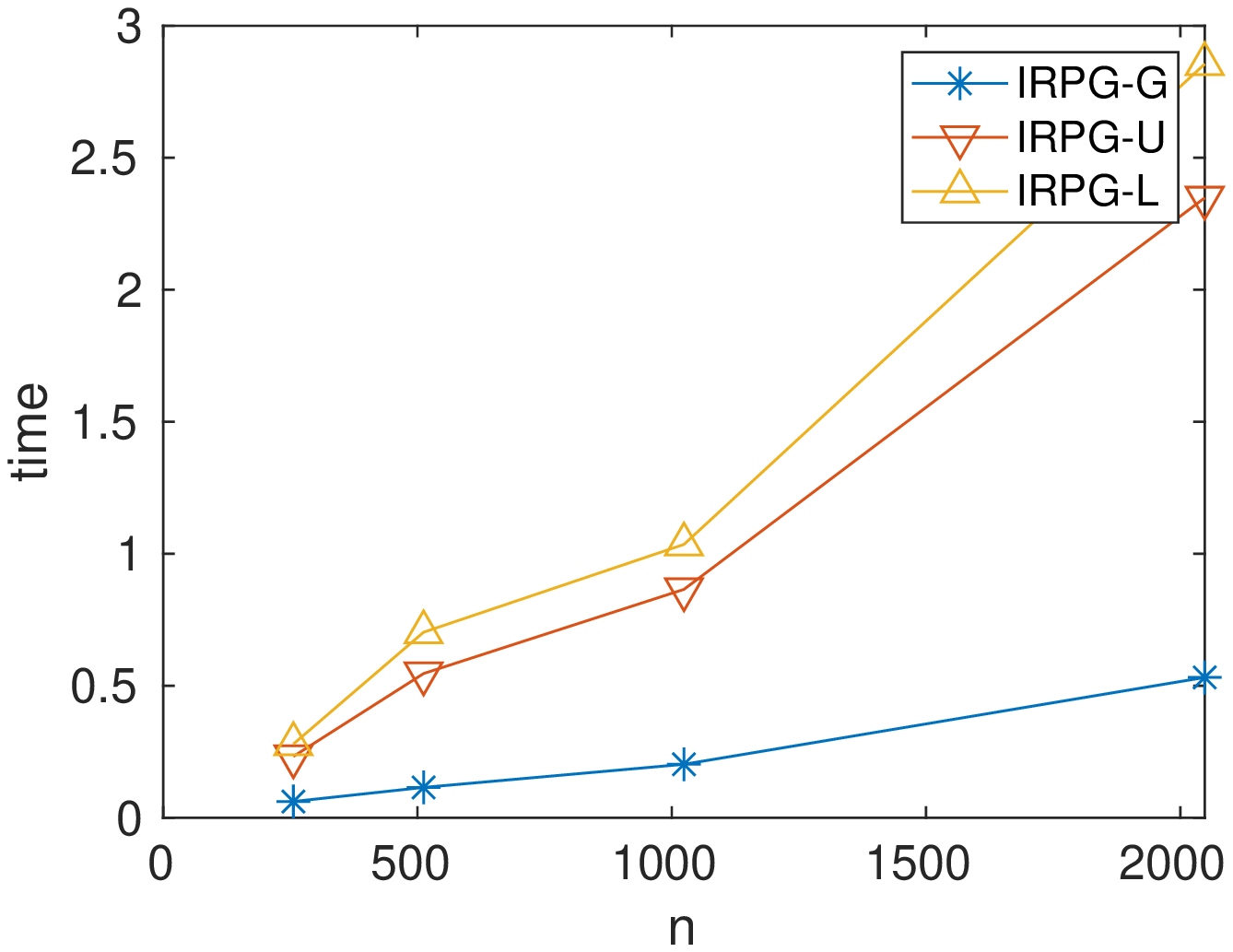}
}
\centerline{
\includegraphics[width=0.45\textwidth]{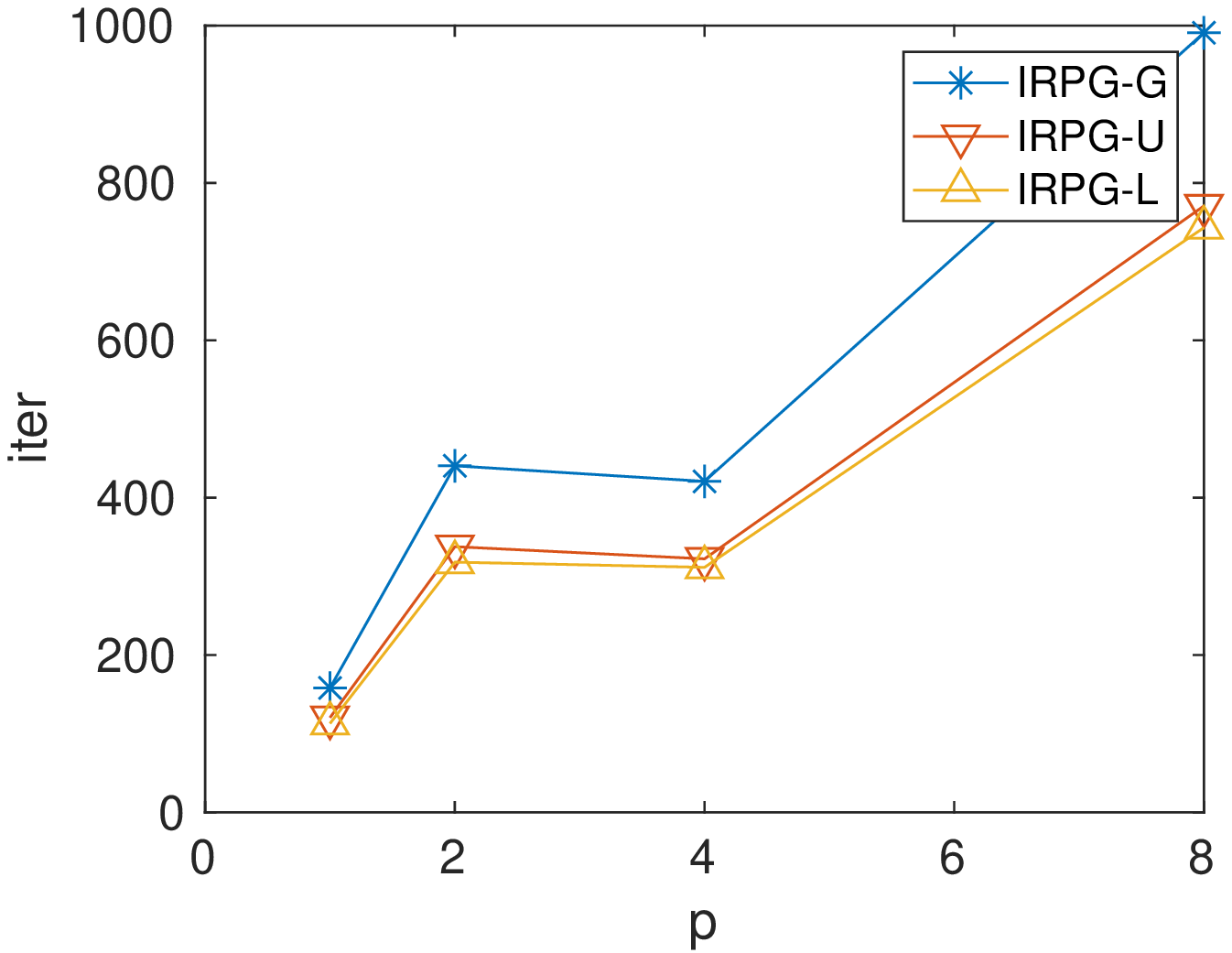}
\includegraphics[width=0.45\textwidth]{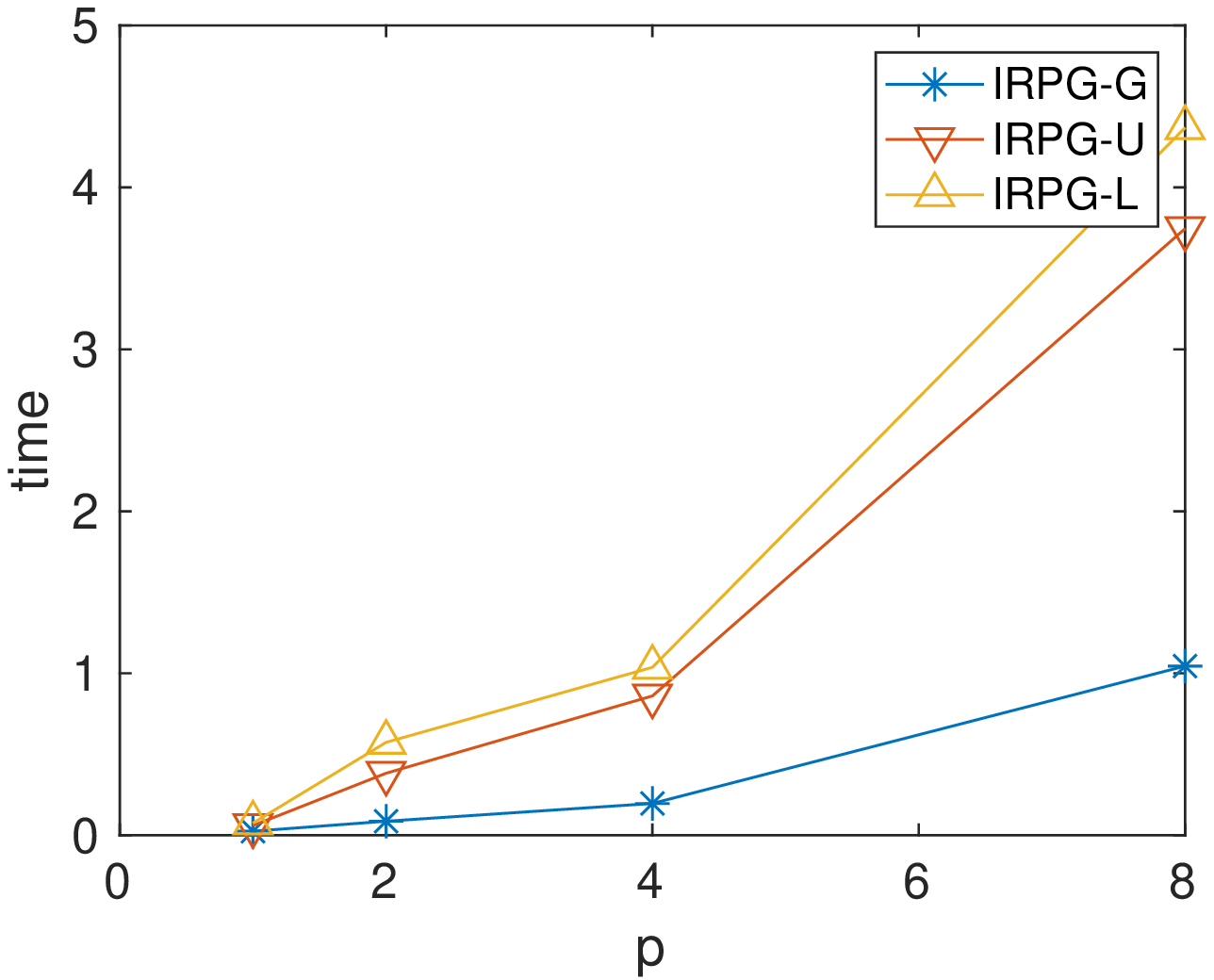}
}
\centerline{
\includegraphics[width=0.45\textwidth]{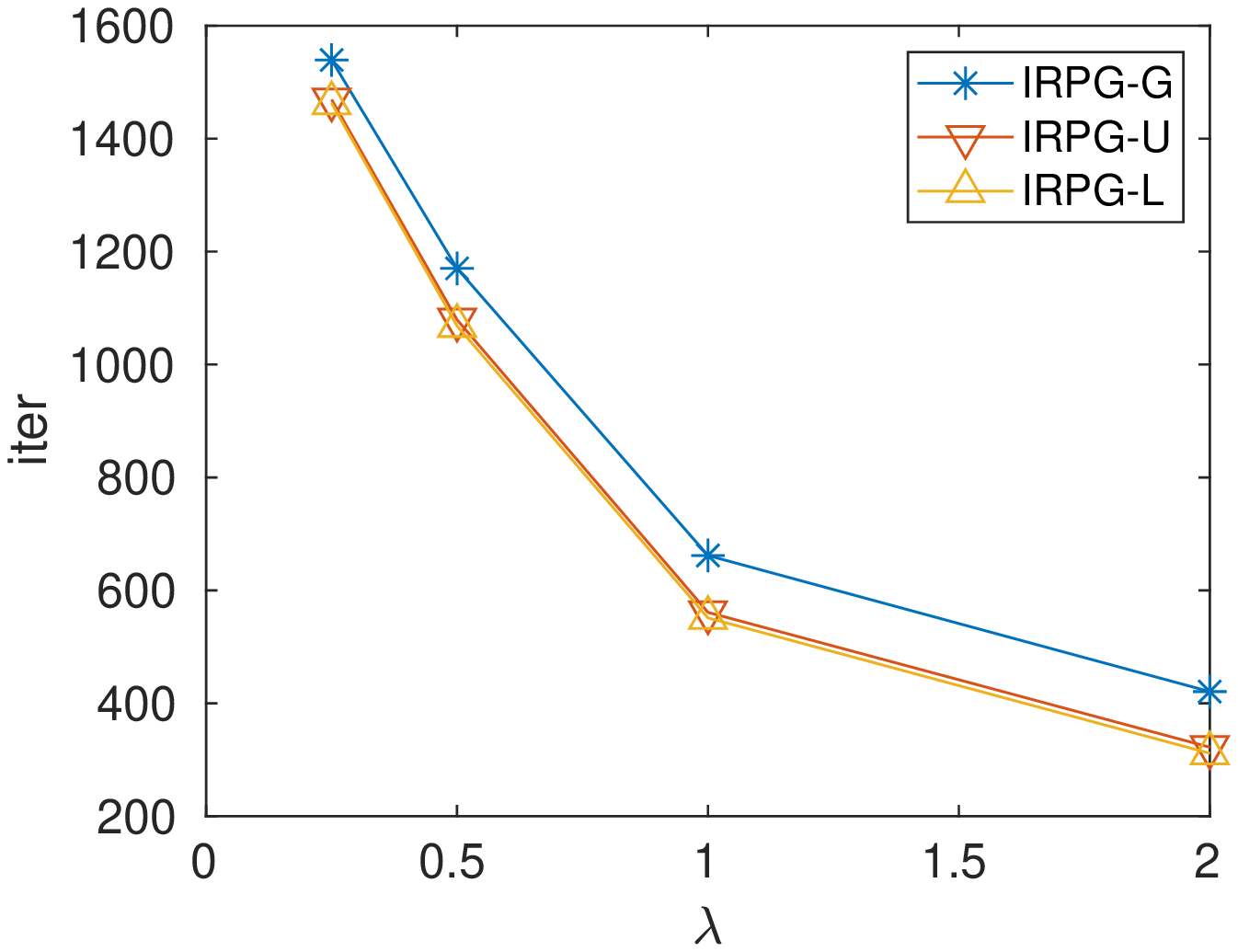}
\includegraphics[width=0.45\textwidth]{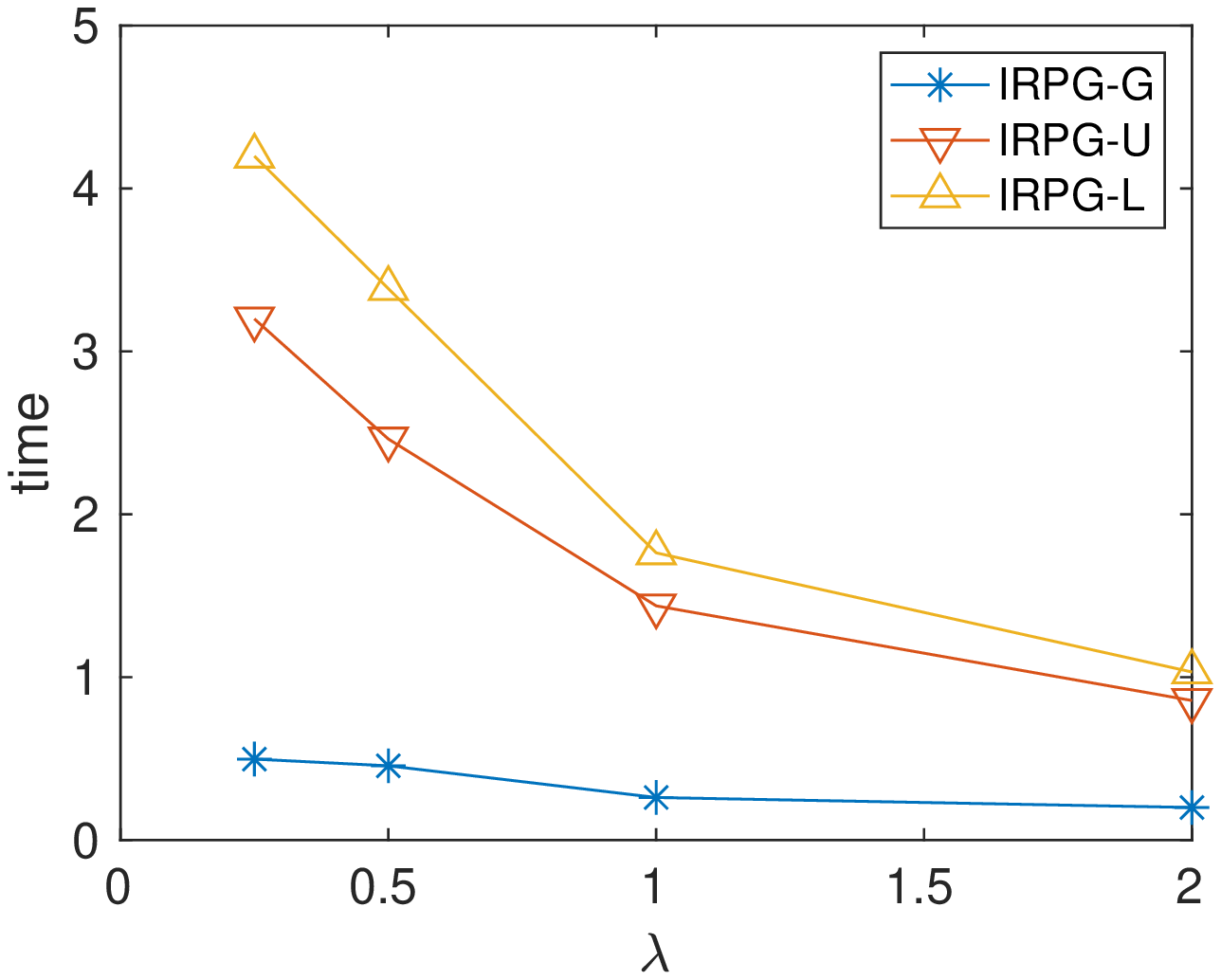}
}
\caption{
Average results of 10 random runs for SPCA. The same random seed is used when comparing the three algorithms. We choose the runs where the three algorithms find the same minimizer in the sense that the norm of the difference between the solutions is smaller than $10^{-2}$. ``time'' denotes the computational time in seconds. ``iter'' denotes the number of iterations. 
Top: multiple values $n = \{256, 512, 1024, 2048\}$ with $p = 4$, $m = 20$, and $\lambda = 2$; Middle: multiple values $p = \{1, 2, 4, 8\}$ with $n = 1024$, $m = 20$, and $\lambda = 2$; Bottom: Multiple values $\lambda = \{0.5, 1, 2, 4\}$ with $n = 1024$, $p = 4$, and $m = 20$.
}
\label{fig:SPCA}
\end{figure}

\section*{acknowledgements}

The authors would like to thank Liwei Zhang for discussions on perturbation analysis for optimization problems.

\bibliographystyle{alpha}
\bibliography{WHlibrary}

\end{document}